\numberwithin{equation}{section}
\newtheorem{theorem}{Theorem}[section]
\newtheorem{proposition}[theorem]{Proposition}
\newtheorem{lemma}[theorem]{Lemma}
\newtheorem{corollary}[theorem]{Corollary}
\newtheorem{remark}[theorem]{Remark}
\theoremstyle{definition}
\def\XXint#1#2#3{{\setbox0=\hbox{$#1{#2#3}{\int}$}
     \vcenter{\hbox{$#2#3$}}\kern-.5\wd0}}
\def\p{\partial}
\def\o{\Omega}
\def\u{\mathbf{u}}
\def\R{{\mathbb R}}
\def\S{{\mathbb S}}
\def\N{{\mathbb N}}
\renewcommand{\d}{\delta }
\newcommand{\e }{\varepsilon }
\newcommand{\ov}{\overline}
\newcommand{\n }{\nabla }
\renewcommand{\th }{\theta }
\begin{document}

\title[Wave equations associated to Liouville-type problems]{Wave equations associated to Liouville-type problems: global existence in time and blow up criteria}

\author{ Weiwei ~Ao}
\address{ Weiwei ~Ao,~Department of mathematics and statistics, Wuhan university, Wuhan, 430072, P.R. China }
\email{wwao@whu.edu.cn}

\author{ Aleks Jevnikar}
\address{ Aleks Jevnikar,~University of Rome `Tor Vergata', Via della Ricerca Scientifica 1, 00133 Roma, Italy}
\email{jevnikar@mat.uniroma2.it}

\author{ Wen Yang}
\address{ Wen ~Yang,~Wuhan Institute of Physics and Mathematics, Chinese Academy of Sciences, P.O. Box 71010, Wuhan 430071, P. R. China}
\email{math.yangwen@gmail.com}

\keywords{Wave equation, Global existence, Blow up criteria, Liouville-type equation, Mean field equation, Toda system, Sinh-Gordon equation, Moser-Trudinger inequality.}

\subjclass[2000]{35L05, 35J61, 35R01, 35A01}

\begin{abstract}
We are concerned with wave equations associated to some Liouville-type problems on compact surfaces, focusing on sinh-Gordon equation and general Toda systems. Our aim is on one side to develop the analysis for wave equations associated to the latter problems and second, to substantially refine the analysis initiated in \cite{cy} concerning the mean field equation. In particular, by exploiting the variational analysis recently derived for Liouville-type problems we prove global existence in time for the sub-critical case and we give general blow up criteria for the super-critical and critical case. The strategy is mainly based on fixed point arguments and improved versions of the Moser-Trudinger inequality.
\end{abstract}

\maketitle

\section{introduction}
In this paper we are concerned with wave equations associated to some Liouville-type problems on compact surfaces arising in mathematical physics: sinh-Gordon equation \eqref{w-sg} and some general Toda systems \eqref{w-toda}. The first wave equation we consider is
\begin{equation}
\label{w-sg}
\partial_t^2u-\Delta_gu=\rho_1\left(\frac{e^u}{\int_{M}e^u}-\frac{1}{|M|}\right)
-\rho_2\left(\frac{e^{-u}}{\int_{M}e^{-u}}-\frac{1}{|M|}\right) \quad \mbox{on } M,
\end{equation}
with $u:\R^+\times M\to \R$, where $(M,g)$ is a compact Riemann surface with total area $|M|$ and metric $g$, $\Delta_g$ is the Laplace-Beltrami operator and $\rho_1, \rho_2$ are two real parameters. Nonlinear evolution equations have been extensively studied in the literature due to their many applications in physics, biology, chemistry, geometry and so on. In particular, the sinh-Gordon model \eqref{w-sg} has been applied to a wide class of mathematical physics problems such as quantum field theories, non commutative field theories, fluid dynamics, kink dynamics, solid state physics, nonlinear optics and we refer to \cite{a, cm, ch, cho, mmr, nat, w, zbp} and the references therein.

\medskip

The stationary equation related to \eqref{w-sg} is the following sinh-Gordon equation:
\begin{equation}
\label{sg}
-\Delta_gu=\rho_1\left(\frac{e^u}{\int_{M}e^u}-\frac{1}{|M|}\right)
-\rho_2\left(\frac{e^{-u}}{\int_{M}e^{-u}}-\frac{1}{|M|}\right).
\end{equation}
In mathematical physics the latter equation describes the mean field equation of the equilibrium turbulence with arbitrarily signed vortices, see \cite{jm, pl}. For more discussions concerning the physical background we refer for example to \cite{c,l,mp,n,os} and the references therein. On the other hand, the case $\rho_{1}=\rho_{2}$ has a close relationship with constant mean curvature surfaces, see \cite{w1,w2}.

Observe that for $\rho_{2}=0$ equation (\ref{sg}) reduces to the following well-know mean field equation:
\begin{equation}
\label{mf}
-\Delta_g u=\rho\left(\frac{e^u}{\int_{M}e^u}-\frac{1}{|M|}\right),
\end{equation}
which has been extensively studied in the literature since it is related to the prescribed Gaussian curvature problem \cite{ba, sch} and Euler flows \cite{cag, kies}. There are by now many results concerning \eqref{mf} and we refer to the survey \cite{tar}. On the other hand, the wave equation associated to \eqref{mf} for $M=\S^2$, that is
\begin{equation}
\label{w-mf}
\partial_t^2u-\Delta_g u=\rho\left(\frac{e^u}{\int_{\S^2}e^u}-\frac{1}{4\pi}\right) \quad \mbox{on }  \S^2,
\end{equation}
was recently considered in \cite{cy}, where the authors obtained some existence results and a first blow up criterion. Let us focus for a moment on the blow up analysis. They showed that in the critical case $\rho=8\pi$ for the finite time blow up solutions to \eqref{w-mf} there exist a sequence $t_k\to T_0^{-}<+\infty$ and a point $x_1\in \S^2$ such that for any $\varepsilon>0,$
\begin{equation} \label{mf-cr}
\lim_{k\to+\infty}\frac{\int_{B(x_{1},\varepsilon)}e^{u(t_k,\cdot)}}{\int_{\S^2}e^{u(t_k,\cdot)}}\geq 1-\varepsilon,
\end{equation}
i.e. the measure $e^{u(t_k)}$ (after normalization) concentrates around one point on $\S^2$ (i.e. it resembles a one bubble). On the other hand, for the general super-critical case $\rho>8\pi$ the blow up analysis is not carried out and we are missing the blow up criteria. One of our aims is to substantially refine the latter analysis and to give general blow up criteria, see Corollary \ref{cr1.1}. As a matter of fact, this will follow by the analysis we will develop for more general problems: the sinh-Gordon equation \eqref{w-sg} and Toda systems \eqref{w-toda}.

\medskip

Let us return now to the sinh-Gordon equation \eqref{sg} and its associated wave equation \eqref{w-sg}. In the last decades the analysis concerning \eqref{mf} was generalized for treating the sinh-Gordon equation \eqref{sg} and we refer to \cite{ajy, jwy, jwy2, jwyz, os} for blow up analysis, to \cite{gjm} for uniqueness aspects and to \cite{bjmr, j1, j2, j3} for what concerns existence results. On the other hand, for what concerns the wave equation associated to \eqref{sg}, i.e. \eqref{w-sg}, there are few results mainly focusing on traveling wave solutions, see for example \cite{a, fll, nat, w, zbp}. One of our aims is to develop the analysis for \eqref{w-sg} in the spirit of \cite{cy} and to refine it with some new arguments. More precisely, by exploiting the variational analysis derived for equation \ref{sg}, see in particular \cite{bjmr}, we will prove global existence in time for \eqref{w-sg} for the sub-critical case and we will give general blow up criteria for the super-critical and critical case. The sub/super-critical case refers to the sharp constant of the associated Moser-Trudinger inequality, as it will be clear in the sequel.

\medskip

Before stating the results, let us fix some notation. Given $T>0$ and a metric space $X$ we will denote $C([0,T];X)$ by $C_T(X)$. $C^k_T(X)$ and $L^k_T(X)$, $k\geq1$, are defined in an analogous way. When we are taking time derivative for $t\in[0,T]$ we are implicitly taking right (resp. left) derivative at the endpoint $t=0$ (resp. $t=T$). When it will be clear from the context we will simply write $H^1, L^2$ to denote $H^1(M), L^2(M)$, respectively, and
$$
	\|u\|_{H^1(M)}^2=\|\n u\|_{L^2(M)}^2+\|u\|_{L^2(M)}^2.
$$	

\medskip

Our first main result is to show that the initial value problem for \eqref{w-sg} is locally well-posed in $H^1\times L^2$.
\begin{theorem}
\label{th.local-sg}
Let $\rho_1,\rho_2\in\mathbb{R}$. Then, for any $(u_0,u_1)\in H^1(M)\times L^2(M)$ such that $\int_{M}u_1=0$, there exist $T=T(\rho_1, \rho_2, \|u_0\|_{H^1}, \|u_1\|_{L^2})>0$ and a unique, stable solution, i.e. depending continuously on $(u_0,u_1)$,
$$
	u:[0,T]\times M\to\mathbb{R}, \quad u\in C_T(H^1)\cap C_T^1(L^2),
$$
of \eqref{w-sg} with initial data
$$
\left\{	\begin{array}{l}
					u(0,\cdot)=u_0, \\
					\partial_t u(0,\cdot)=u_1.
				\end{array} \right.
$$
Furthermore
\begin{align} \label{u-const}
\int_{M}u(t,\cdot)=\int_{M}u_0 \quad \mbox{for all } t\in[0,T].
\end{align}
\end{theorem}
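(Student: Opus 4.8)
The plan is to set up a fixed-point argument in the space $X_T = C_T(H^1)\cap C_T^1(L^2)$, using the fact that the nonlinearity on the right-hand side of \eqref{w-sg} is, after subtracting its mean, a smooth and Lipschitz function of $u$ with respect to the relevant norms. First I would reduce to the zero-mean setting: writing $\bar u = \frac{1}{|M|}\int_M u$, the mean $\bar u(t)$ solves the ODE $\partial_t^2 \bar u = 0$ with $\bar u(0) = \frac{1}{|M|}\int_M u_0$ and $\partial_t\bar u(0) = \frac{1}{|M|}\int_M u_1 = 0$, so $\bar u(t) \equiv \bar u(0)$ is constant; this immediately gives \eqref{u-const} and also shows that $e^{\pm u}/\int_M e^{\pm u}$ is unchanged if we replace $u$ by $u - \bar u$. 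Hence it suffices to solve for $v = u - \bar u$ with $\int_M v = 0$, and the equation for $v$ has the form $\partial_t^2 v - \Delta_g v = F(v)$ where
$$
F(v) = \rho_1\left(\frac{e^v}{\int_M e^v} - \frac{1}{|M|}\right) - \rho_2\left(\frac{e^{-v}}{\int_M e^{-v}} - \frac{1}{|M|}\right),
$$
which has zero mean for every $v$, so the equation is consistent with $\int_M v(t,\cdot) = 0$ being propagated.

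Next I would write the solution via Duhamel's formula using the wave propagator on $M$ restricted to the zero-mean subspace. Let $\{S(t)\}$ denote the solution operator of the homogeneous equation $\partial_t^2 w - \Delta_g w = 0$; on the zero-mean subspace $-\Delta_g$ is positive and self-adjoint, so $S(t)$ and $\dot S(t)$ are bounded on $H^1 \times L^2$ uniformly for $t$ in bounded intervals, with the standard energy estimate $\|w(t)\|_{H^1} + \|\partial_t w(t)\|_{L^2} \lesssim \|w_0\|_{H^1} + \|w_1\|_{L^2} + \int_0^t \|f(s)\|_{L^2}\,ds$ for $\partial_t^2 w - \Delta_g w = f$. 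Then define the map
$$
\Phi(v)(t) = \cos(t\sqrt{-\Delta_g})\,v_0 + \frac{\sin(t\sqrt{-\Delta_g})}{\sqrt{-\Delta_g}}\,v_1 + \int_0^t \frac{\sin((t-s)\sqrt{-\Delta_g})}{\sqrt{-\Delta_g}}\, F(v(s))\,ds,
$$
and show $\Phi$ is a contraction on a closed ball of $X_T$ for $T$ small. The key nonlinear estimate is that $v \mapsto F(v)$ is locally Lipschitz from $H^1(M)$ to $L^2(M)$: this is where the Moser-Trudinger inequality enters, since for $v \in H^1(M)$ one has $e^v \in L^p(M)$ for all $p < \infty$ with norm controlled by $\|v\|_{H^1}$, and more precisely the map $v \mapsto e^v$ is locally Lipschitz $H^1 \to L^2$ because $|e^{v_1} - e^{v_2}| \le (e^{v_1} + e^{v_2})|v_1 - v_2|$ and one applies Hölder together with Moser-Trudinger to bound $\|(e^{v_1}+e^{v_2})|v_1-v_2|\|_{L^2}$ by $\|e^{v_1}+e^{v_2}\|_{L^4}\,\|v_1-v_2\|_{L^4} \lesssim C(\|v_i\|_{H^1})\,\|v_1-v_2\|_{H^1}$. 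The denominators $\int_M e^{\pm v}$ are bounded below by $|M| e^{-\fint |v|} > 0$ via Jensen, hence stay bounded away from zero on a ball, so division does not destroy the Lipschitz bound. Combining with the energy estimate, $\|\Phi(v_1) - \Phi(v_2)\|_{X_T} \le C T\, \|v_1 - v_2\|_{X_T}$ on a fixed ball, so for $T = T(\rho_1,\rho_2,\|u_0\|_{H^1},\|u_1\|_{L^2})$ small enough $\Phi$ is a contraction; its fixed point is the desired solution, and uniqueness plus continuous dependence on $(u_0,u_1)$ follow from the same Lipschitz estimates (Gronwall for uniqueness, and a standard comparison of two Duhamel iterates for stability).

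The main obstacle, and the only genuinely nonelementary point, is the nonlinear Lipschitz estimate $F : H^1 \to L^2$, which relies essentially on the Moser-Trudinger inequality $\int_M e^{v} \le C \exp\big(\frac{1}{16\pi}\|\n v\|_{L^2}^2 + \fint v\big)$ to control the exponential terms and their differences in $L^p$ for $p$ slightly bigger than $2$; everything else (the mean splitting giving \eqref{u-const}, the energy estimates for the linear wave equation, the contraction-mapping conclusion) is routine. One should also check that the fixed point indeed lies in $C_T(H^1)\cap C_T^1(L^2)$ rather than merely $L^\infty_T(H^1)$, which follows from strong continuity of $t \mapsto \cos(t\sqrt{-\Delta_g})$ on $H^1$ and the continuity of the Duhamel term, using that $s \mapsto F(v(s))$ is continuous into $L^2$ once $v \in C_T(H^1)$.
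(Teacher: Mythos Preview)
Your proposal is correct and follows essentially the same approach as the paper: a contraction-mapping argument via Duhamel's formula, with the key nonlinear estimate being the local Lipschitz continuity of $v\mapsto e^{\pm v}/\int_M e^{\pm v}$ from $H^1$ to $L^2$, proved using Moser--Trudinger to control $\|e^{\pm v}\|_{L^4}$, H\"older to pair with $\|v_1-v_2\|_{L^4}$, Sobolev embedding $H^1\hookrightarrow L^4$, and Jensen to bound the denominators below. The only cosmetic difference is that you first split off the mean and work in the zero-mean subspace, whereas the paper builds the constraint $\fint_M u(s,\cdot)=I$ directly into the domain $B_T$ of the fixed-point map and verifies it is preserved; the estimates are otherwise identical.
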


\medskip

\begin{remark}
The assumption on the initial datum $u_1$ to have zero average guarantees that the average \eqref{u-const} of the solution $u(t,\cdot)$ to \eqref{w-sg} is preserved in time. A consequence of the latter property is that the energy $E(u(t,\cdot))$ given in \eqref{energy-sg} is preserved in time as well, which will be then crucially used in the sequel, see the discussion later on.
\end{remark}

The proof is based on a fixed point argument and the standard Moser-Trudinger inequality \eqref{mt}, see section \ref{sec:proof}. Once the local existence is established we address the existence of a global solution to \eqref{w-sg}. Indeed, by exploiting an energy argument jointly with the Moser-Trudinger inequality associated to \eqref{sg}, see \eqref{mt-sg}, we deduce our second main result.
\begin{theorem}
\label{th.global-sg}
Suppose $\rho_1,\rho_2<8\pi.$ Then, for any $(u_0,u_1)\in H^1(M)\times L^2(M)$ such that $\int_{M}u_1=0$, there exists a unique global solution $u\in C(\R^+;H^1)\cap C^1(\R^+;L^2)$ of \eqref{w-sg} with initial data $(u_0,u_1)$.
\end{theorem}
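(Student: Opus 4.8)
The plan is to upgrade the local solution from Theorem \ref{th.local-sg} to a global one via the usual continuation argument: a solution defined on $[0,T)$ extends past $T$ unless $\|u(t,\cdot)\|_{H^1}+\|\partial_t u(t,\cdot)\|_{L^2}\to\infty$ as $t\to T^-$. So it suffices to derive an a priori bound on this quantity on any finite time interval, depending only on the data and the length of the interval. The natural conserved quantity is the energy associated to \eqref{w-sg}, which should read
\begin{equation*}
E(u(t,\cdot))=\frac12\|\partial_t u(t,\cdot)\|_{L^2}^2+\frac12\|\n u(t,\cdot)\|_{L^2}^2-\rho_1\log\left(\dashint_M e^{u(t,\cdot)}\right)-\rho_2\log\left(\dashint_M e^{-u(t,\cdot)}\right),
\end{equation*}
and the first step is to verify rigorously that $E(u(t,\cdot))=E(u_0,u_1)$ for $t\in[0,T)$: formally this is multiplying \eqref{w-sg} by $\partial_t u$ and integrating over $M$, using $\int_M \partial_t u=0$ (which holds by \eqref{u-const}) to kill the constant terms $\frac{1}{|M|}\int_M \partial_t u$; rigorously it requires an approximation/regularization argument since $C_T(H^1)\cap C^1_T(L^2)$ solutions are not smooth enough to justify the computation directly.

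The second and central step is to convert conservation of energy into an $H^1$ bound, and this is where the Moser--Trudinger inequality associated to \eqref{sg} enters. For $\rho_1,\rho_2<8\pi$ the sharp sinh-Gordon Moser--Trudinger inequality \eqref{mt-sg} states (in the form proved, e.g., in \cite{ohtsuka-suzuki} or \cite{bjmr}) that there is a constant $C$ with
\begin{equation*}
\rho_1\log\left(\dashint_M e^{u-\ov u}\right)+\rho_2\log\left(\dashint_M e^{-(u-\ov u)}\right)\le \frac12\left(\max\{\rho_1,0\}+\max\{\rho_2,0\}\right)\frac{1}{8\pi}\|\n u\|_{L^2}^2 + C
\end{equation*}
for all $u\in H^1(M)$, where $\ov u=\dashint_M u$; the point is that the coefficient in front of $\|\n u\|_{L^2}^2$ is strictly less than $\frac12$. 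Since $\ov u=\dashint_M u_0$ is fixed in time by \eqref{u-const}, we have $\log\dashint_M e^{\pm u}=\log\dashint_M e^{\pm(u-\ov u)}\pm\ov u$, so plugging the inequality into the conserved energy gives
\begin{equation*}
E(u_0,u_1)=E(u(t,\cdot))\ge \frac12\|\partial_t u\|_{L^2}^2+\left(\frac12-\theta\right)\|\n u\|_{L^2}^2 - C'
\end{equation*}
for some $\theta\in(0,\tfrac12)$ and $C'=C'(\rho_1,\rho_2,\ov u)$. This bounds $\|\partial_t u(t,\cdot)\|_{L^2}$ and $\|\n u(t,\cdot)\|_{L^2}$ uniformly in $t$; combined with $\|u(t,\cdot)\|_{L^2}\le \|\n u(t,\cdot)\|_{L^2}\cdot(\text{Poincar\'e}) + |\ov u|\,|M|^{1/2}$ using the fixed average, we get a uniform bound on $\|u(t,\cdot)\|_{H^1}+\|\partial_t u(t,\cdot)\|_{L^2}$ on the whole interval of existence.

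The final step is the continuation argument itself: suppose the maximal existence time $T^*$ is finite. By the uniform bound just obtained, $\sup_{t<T^*}\big(\|u(t,\cdot)\|_{H^1}+\|\partial_t u(t,\cdot)\|_{L^2}\big)=:R<\infty$. Pick $t_0$ close to $T^*$ and apply Theorem \ref{th.local-sg} with initial data $(u(t_0,\cdot),\partial_t u(t_0,\cdot))$ (note $\int_M \partial_t u(t_0,\cdot)=0$ by \eqref{u-const}); since the existence time in Theorem \ref{th.local-sg} depends on the data only through $\|u_0\|_{H^1},\|u_1\|_{L^2}\le R$, there is a fixed $\tau=\tau(\rho_1,\rho_2,R)>0$ such that the solution extends to $[t_0,t_0+\tau]$. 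Choosing $t_0>T^*-\tau$ yields a solution past $T^*$, contradicting maximality; hence $T^*=\infty$. The main obstacle is making the energy conservation rigorous at the $H^1\times L^2$ regularity level — one should either mollify the equation and pass to the limit, or invoke the stability/continuous-dependence statement in Theorem \ref{th.local-sg} to approximate by smoother solutions for which the multiplier computation is licit, and then pass to the limit in the energy identity. Everything else is a routine bootstrap once \eqref{mt-sg} is in hand.
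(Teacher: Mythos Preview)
Your overall strategy---energy conservation, the sinh-Gordon Moser--Trudinger inequality \eqref{mt-sg}, Poincar\'e with the fixed mean, and continuation---is exactly the paper's approach. There is, however, a genuine slip in your central estimate that breaks the argument as written. You claim
\[
\rho_1\log\fint_M e^{u-\ov u}+\rho_2\log\fint_M e^{-(u-\ov u)}\le \frac{\rho_1^{+}+\rho_2^{+}}{16\pi}\,\|\n u\|_{L^2}^2 + C
\]
and then assert the coefficient is $<\tfrac12$. But the hypothesis is only $\rho_1,\rho_2<8\pi$, so $\rho_1^{+}+\rho_2^{+}$ can be arbitrarily close to $16\pi$ (take $\rho_1=\rho_2=7\pi$), giving a coefficient close to $1$; the energy then no longer controls $\|\n u\|_{L^2}^2$. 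What you have written is essentially the standard Moser--Trudinger inequality \eqref{mt} applied to $u$ and to $-u$ separately and summed, which is strictly weaker than \eqref{mt-sg} and does not suffice in the full range.

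The fix, and what the paper does, is to combine \eqref{mt-sg} with Jensen. By Jensen both $\log\int_M e^{u-\ov u}$ and $\log\int_M e^{-u+\ov u}$ are nonnegative, so for $\rho_1,\rho_2\in(0,8\pi)$ one may replace each $\rho_i$ by $\rho:=\max\{\rho_1,\rho_2\}$ and then apply \eqref{mt-sg} to the \emph{sum} of the two logs, obtaining coefficient $\rho/16\pi<\tfrac12$. The remaining sign cases ($\rho_i\le 0$ for one or both $i$) are handled separately, using \eqref{mt} for the single positive term or directly when both are nonpositive. With this correction your proof goes through and matches the paper's. Incidentally, your caution about justifying energy conservation at the $C_T(H^1)\cap C^1_T(L^2)$ level is well placed; the paper's Lemma~\ref{lem-const} carries out only the formal multiplier computation, so your remark about approximating by smoother solutions via the stability in Theorem~\ref{th.local-sg} is a genuine addition.
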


The latter case $\rho_1,\rho_2<8\pi$ is referred as the sub-critical case in relation to the sharp constant $8\pi$ in the Moser-Trudinger inequality \eqref{mt-sg}. The critical and super-critical case in which $\rho_i\geq8\pi$ for some $i$ is subtler since the solutions to \eqref{sg} might blow up. However, by exploiting the recent analysis concerning \eqref{sg}, see in particular \cite{bjmr}, based on improved versions of the Moser-Trudinger inequality, see Proposition \ref{prop-sg-mt}, we are able to give quite general blow up criteria for \eqref{w-sg}. Our third main result is the following.
\begin{theorem}
\label{th.con-sg}
Suppose $\rho_i\geq8\pi$ for some $i$. Let $(u_0,u_1)\in H^1(M)\times L^2(M)$ be such that $\int_{M}u_1=0$ and let $u$ be the solution of \eqref{w-sg} obtained in Theorem \ref{th.local-sg}. Suppose that $u$ exists in $[0,T_0)$ for some $T_0<+\infty$ and it can not be extended beyond $T_0$. Then, there exists a sequence $t_k\to T_0^{-}$ such that
\begin{align*}
	\lim_{k\to+\infty}\|\nabla u(t_k,\cdot)\|_{L^2}=+\infty, \quad \lim_{k\to+\infty}\max\left\{\int_{M}e^{u(t_k,\cdot)}, \int_{M}e^{-u(t_k,\cdot)}\right\} =+\infty.
\end{align*}
Furthermore, if $\rho_1\in[8m_1\pi,8(m_1+1)\pi)$ and $\rho_2\in[8m_2\pi,8(m_2+1)\pi)$ for some $m_1,m_2\in\mathbb{N}$, then, there exist points $\{x_1,\dots,x_{m_1}\}\subset M$ such that for any $\varepsilon>0,$ either
$$
\lim_{k\to+\infty}\frac{\int_{\bigcup_{l=1}^{m_1}B(x_{l},\varepsilon)}e^{u(t_k,\cdot)}}{\int_{M}e^{u(t_k,\cdot)}}\geq 1-\varepsilon,
$$
or there exist points $\{y_1,\dots,y_{m_2}\}\subset M$ such that for any $\varepsilon>0,$
$$
\lim_{k\to+\infty}\frac{\int_{\bigcup_{l=1}^{m_2}B(y_{l},\varepsilon)}e^{-u(t_k,\cdot)}}{\int_{M}e^{-u(t_k,\cdot)}}\geq 1-\varepsilon.
$$
\end{theorem}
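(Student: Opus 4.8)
The plan is to argue by contradiction, exploiting conservation of energy together with the improved Moser--Trudinger inequality stated in Proposition~\ref{prop-sg-mt}. So suppose $u$ is a finite-time blow up solution on $[0,T_0)$ with $T_0<+\infty$. First I would recall that by Theorem~\ref{th.local-sg} the average $\int_M u(t,\cdot)$ is preserved, and hence the energy
$$
E(u(t,\cdot))=\frac12\|\p_t u(t,\cdot)\|_{L^2}^2+\frac12\|\n u(t,\cdot)\|_{L^2}^2-\rho_1\log\left(\dashint_M e^{u(t,\cdot)}\right)-\rho_2\log\left(\dashint_M e^{-u(t,\cdot)}\right)
$$
(with the notation of \eqref{energy-sg}) is constant in $t$; call its value $E_0$. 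The first key step is to show $\limsup_{t\to T_0^-}\|\n u(t,\cdot)\|_{L^2}=+\infty$. Indeed, if $\|\n u(t,\cdot)\|_{L^2}$ stayed bounded on $[0,T_0)$, then by the Moser--Trudinger inequality \eqref{mt-sg} the nonlinear terms $\log\dashint_M e^{\pm u}$ would be bounded, hence $\|\p_t u(t,\cdot)\|_{L^2}$ would be bounded via energy conservation, and a Poincar\'e-type bound (using the fixed average) would give a uniform $H^1\times L^2$ bound on $(u(t,\cdot),\p_t u(t,\cdot))$; the local existence time in Theorem~\ref{th.local-sg} depends only on these norms, so the solution could be continued past $T_0$, a contradiction. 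This yields a sequence $t_k\to T_0^-$ with $\|\n u(t_k,\cdot)\|_{L^2}\to+\infty$.

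The second step is to deduce the divergence of $\max\{\int_M e^{u(t_k,\cdot)},\int_M e^{-u(t_k,\cdot)}\}$ along (a subsequence of) the same $t_k$. Since $\|\p_t u\|_{L^2}^2\geq0$, energy conservation gives
$$
\frac12\|\n u(t_k,\cdot)\|_{L^2}^2\leq E_0+\rho_1\log\left(\dashint_M e^{u(t_k,\cdot)}\right)+\rho_2\log\left(\dashint_M e^{-u(t_k,\cdot)}\right),
$$
so the right-hand side must diverge; since $\rho_1,\rho_2$ are fixed, at least one of $\log\dashint_M e^{u(t_k,\cdot)}$, $\log\dashint_M e^{-u(t_k,\cdot)}$ diverges to $+\infty$ along a subsequence, which is exactly the claimed blow up of $\max\{\int_M e^{u(t_k,\cdot)},\int_M e^{-u(t_k,\cdot)}\}$ (note $\int_M u(t_k,\cdot)$ is fixed so $e^{\pm u}$ cannot go to $0$ in $L^1$ without the other going to $+\infty$).

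The third and main step is the concentration statement when $\rho_1\in[8m_1\pi,8(m_1+1)\pi)$, $\rho_2\in[8m_2\pi,8(m_2+1)\pi)$. Here I would argue again by contradiction using the improved Moser--Trudinger inequality of Proposition~\ref{prop-sg-mt}: roughly, that inequality says that if the normalized measures $\frac{e^{u}}{\int_M e^u}$ and $\frac{e^{-u}}{\int_M e^{-u}}$ do \emph{not} concentrate near at most $m_1$ (resp. $m_2$) points — i.e. for every choice of $m_1$ points the fraction of mass of $e^u$ in small balls stays below $1-\e$, and likewise for $e^{-u}$ — then there is an improved constant: one has $\log\dashint_M e^{\pm u}\leq \frac{1}{8(m_i+1)\pi-\d}\|\n u\|_{L^2}^2+C$ for some $\d>0$. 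Plugging this into the energy inequality above, with $\rho_i<8(m_i+1)\pi$, would bound $\|\n u(t_k,\cdot)\|_{L^2}^2$ uniformly, contradicting Step~1. Hence along $t_k$ the measure $e^{u(t_k,\cdot)}$ concentrates around some $m_1$ points $\{x_1,\dots,x_{m_1}\}$, \emph{or} $e^{-u(t_k,\cdot)}$ concentrates around some $m_2$ points $\{y_1,\dots,y_{m_2}\}$. A minor technical point is that the concentration points a priori depend on $k$ and on $\e$; one extracts a further subsequence so that each $x_l(t_k)$ converges (compactness of $M$), and a diagonal argument over $\e\downarrow0$ fixes a single set of limit points working for all $\e$, at the (harmless) cost of relabeling. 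I expect this third step — correctly quoting and applying the improved inequality from \cite{bjmr} and handling the $k$- and $\e$-dependence of the concentration points — to be the main obstacle; Steps~1 and~2 are essentially the energy/continuation argument already used in the proof of Theorem~\ref{th.global-sg}.
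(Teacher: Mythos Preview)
Your proposal is correct and close in spirit to the paper's proof, but the execution differs in two places worth noting.

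First, the order of Steps~1 and~2 is reversed in the paper: it shows the exponential blow-up $\max\{\int_M e^{u(t_k)},\int_M e^{-u(t_k)}\}\to+\infty$ first (by contradiction via the continuation argument, exactly as you do for the gradient), and \emph{then} deduces $\|\n u(t_k,\cdot)\|_{L^2}\to+\infty$ from the standard Moser--Trudinger inequality~\eqref{mt} together with the fact that $\ov u(t)$ is constant. Your route (gradient first via continuation, then exponentials via energy conservation) is equally valid.

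Second, and more substantively, for the concentration step the paper does \emph{not} argue by contradiction through Proposition~\ref{prop-sg-mt}. Instead it picks $\tilde\rho_i>\rho_i$ with $\tilde\rho_i\in(8m_i\pi,8(m_i+1)\pi)$, observes that
\[
J_{\tilde\rho_1,\tilde\rho_2}(u(t_k,\cdot))\leq E(u(0,\cdot))+C-(\tilde\rho_1-\rho_1)\log\!\int_M e^{u(t_k)}-(\tilde\rho_2-\rho_2)\log\!\int_M e^{-u(t_k)}\to-\infty
\]
by the exponential blow-up, and then applies Proposition~\ref{prop-sg} directly (which is stated for parameters in the \emph{open} intervals, hence the need for the perturbation $\rho_i\mapsto\tilde\rho_i$). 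This is slicker: Proposition~\ref{prop-sg} already packages the covering-lemma step that your approach needs to bridge ``not concentrated at $m_i$ points'' and ``spread over $m_i+1$ separated sets'' in order to invoke Proposition~\ref{prop-sg-mt}. Your version of the improved bound, written as separate inequalities $\log\fint_M e^{\pm u}\leq \frac{1}{8(m_i+1)\pi-\d}\|\n u\|^2+C$, is also slightly off: Proposition~\ref{prop-sg-mt} controls the \emph{sum} $8(m_1{+}1)\pi\log\int e^{u-\ov u}+8(m_2{+}1)\pi\log\int e^{-u+\ov u}$, not each term individually; the contradiction still closes once you combine this with Jensen ($\log\fint e^{\pm(u-\ov u)}\geq 0$) and $\rho_i<8(m_i{+}1)\pi$, but it needs that extra line. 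On the other hand, your remark about extracting limit concentration points via compactness of $M$ and a diagonal argument in $\e$ is more careful than the paper, which leaves that point implicit.
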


The latter result shows that, once the two parameters $\rho_1, \rho_2$ are fixed in a critical or super-critical regime, the finite time blow up of the solutions to \eqref{w-sg} yields the following alternative: either the measure $e^{u(t_k)}$ (after normalization) concentrates around (at most) $m_1$ points on $M$ (i.e. it resembles a $m_1$-bubble) or $e^{-u(t_k)}$ concentrates around $m_2$ points on $M$. We point out this is new for the mean field equation \eqref{w-mf} as well and generalizes the previous blow up criterion \eqref{mf-cr} obtained in \cite{cy} for $\rho=8\pi$. More precisely, the general blow up criteria for the super-critical mean field equation are the following.
\begin{corollary}
\label{cr1.1}
Suppose $\rho\in[8m\pi,8(m+1)\pi)$ for some $m\in\mathbb{N}$, $m\geq1$. Let $(u_0,u_1)\in H^1(M)\times L^2(M)$ be such that $\int_{M}u_1=0$, and let $u$ be a solution of \eqref{w-mf}, where $\S^2$ is replaced by a compact surface $M$. Suppose that $u$ exists in $[0,T_0)$ for some $T_0<+\infty$ and it can not be extended beyond $T_0$. Then, there exist a sequence $t_k\to T_0^{-}$ and $m$ points $\{p_1,\dots,p_m\}\subset M$ such that for any $\varepsilon>0$,
$$\lim_{k\to\infty}\frac{\int_{\bigcup_{l=1}^{m}B(p_{l},\varepsilon)}e^{u(t_k,\cdot)}}{\int_{M}e^{u(t_k,\cdot)}}
\geq1-\varepsilon.$$
\end{corollary}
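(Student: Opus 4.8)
The plan is to obtain Corollary \ref{cr1.1} as a direct specialization of Theorem \ref{th.con-sg} to the case $\rho_2=0$. First I would observe that the wave equation \eqref{w-mf}, with $\S^2$ replaced by the compact surface $M$ (so that $4\pi$ is replaced by $|M|$), is exactly \eqref{w-sg} with the parameter choice $\rho_1=\rho$ and $\rho_2=0$: the term multiplying $-\rho_2$ in \eqref{w-sg} then vanishes identically, leaving $\partial_t^2 u-\Delta_g u=\rho\big(e^u/\int_M e^u-1/|M|\big)$. Since $(u_0,u_1)\in H^1(M)\times L^2(M)$ with $\int_M u_1=0$, Theorem \ref{th.local-sg} yields a unique local solution in $C_T(H^1)\cap C^1_T(L^2)$; by the uniqueness (and stability) part of that theorem the given solution $u$ of \eqref{w-mf} coincides with it on its maximal interval of existence, and the average $\int_M u(t,\cdot)$ (hence the associated energy) is conserved, so the hypotheses of Theorem \ref{th.con-sg} are met for $u$ with blow-up time $T_0$.

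Next I would verify the quantization ranges. The assumption $\rho\in[8m\pi,8(m+1)\pi)$ with $m\ge 1$ gives $\rho_1=\rho\in[8m_1\pi,8(m_1+1)\pi)$ with $m_1=m$, and in particular $\rho_1\ge 8\pi$, so the standing hypothesis ``$\rho_i\ge 8\pi$ for some $i$'' of Theorem \ref{th.con-sg} holds. Moreover $\rho_2=0$ lies in $[0,8\pi)=[8m_2\pi,8(m_2+1)\pi)$ with $m_2=0$. Applying Theorem \ref{th.con-sg} therefore produces a sequence $t_k\to T_0^-$ along which the stated alternative holds: either the normalized measure $e^{u(t_k,\cdot)}$ concentrates at $m_1=m$ points, or there exist $m_2=0$ points $\{y_1,\dots,y_{m_2}\}\subset M$ with $\lim_{k\to\infty}\big(\int_{\bigcup_{l=1}^{m_2}B(y_l,\varepsilon)}e^{-u(t_k,\cdot)}\big)/\big(\int_M e^{-u(t_k,\cdot)}\big)\ge 1-\varepsilon$ for every $\varepsilon>0$. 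In the latter case the index set is empty, so $\bigcup_{l=1}^{0}B(y_l,\varepsilon)=\emptyset$ and the left-hand side equals $0$, contradicting $0\ge 1-\varepsilon$ for any $\varepsilon\in(0,1)$. Hence the second alternative is vacuous and the first must hold: there exist $\{x_1,\dots,x_m\}\subset M$ with $\lim_{k\to\infty}\big(\int_{\bigcup_{l=1}^{m}B(x_l,\varepsilon)}e^{u(t_k,\cdot)}\big)/\big(\int_M e^{u(t_k,\cdot)}\big)\ge 1-\varepsilon$ for every $\varepsilon>0$; relabeling $p_l:=x_l$ gives the claim.

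Since this is genuinely a corollary, there is no substantial analytical obstacle: the entire content has already been packaged into Theorem \ref{th.con-sg}. The only points needing a word of care are the ones indicated above, namely (i) matching \eqref{w-mf} with \eqref{w-sg}$|_{\rho_2=0}$ and using the uniqueness in Theorem \ref{th.local-sg} to identify ``a solution of \eqref{w-mf}'' with the solution to which Theorem \ref{th.con-sg} applies, and (ii) the bookkeeping that $\rho_2=0$ forces $m_2=0$, which makes the ``$e^{-u}$-concentration'' branch of the dichotomy empty and hence forces the ``$e^{u}$-concentration at $m$ points'' branch.
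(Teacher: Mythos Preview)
Your proposal is correct and follows exactly the approach implicit in the paper: the corollary is stated immediately after Theorem~\ref{th.con-sg} without a separate proof, being the specialization $\rho_1=\rho$, $\rho_2=0$. Your observation that $m_2=0$ renders the $e^{-u}$-concentration branch vacuous (the union over an empty index set is empty, forcing $0\ge 1-\varepsilon$) is precisely the bookkeeping needed, and matches how Proposition~\ref{prop-sg} degenerates in this case.
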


Finally, it is worth to point out some possible generalizations of the results so far.
\begin{remark}
We may consider the following more general weighted problem
$$
\partial_t^2u-\Delta_gu=\rho_1\left(\frac{h_1e^u}{\int_{M}h_1e^u}-\frac{1}{|M|}\right)
-\rho_2\left(\frac{h_2e^{-u}}{\int_{M}h_2e^{-u}}-\frac{1}{|M|}\right),
$$
where $h_i=h_i(x)$ are two smooth functions such that $\frac 1C \leq h_i \leq C$ on $M$, $i=1,2$, for some $C>0$. It is easy to check that Theorems \ref{th.local-sg}, \ref{th.global-sg} and \ref{th.con-sg} extend to this case as well. The same argument applies also to the Toda system \eqref{w-toda}.

On the other hand, motivated by several applications in mathematical physics \cite{a, ons, ss} we may consider the following asymmetric sinh-Gordon wave equation
$$
\partial_t^2u-\Delta_gu=\rho_1\left(\frac{e^u}{\int_{M}e^u}-\frac{1}{|M|}\right)
-\rho_2\left(\frac{e^{-au}}{\int_{M}e^{-au}}-\frac{1}{|M|}\right),
$$
with $a>0$. For $a=2$, which corresponds to the Tzitz\'eica equation, we can exploit the detailed analysis in \cite{jy} to derive Theorems \ref{th.local-sg}, \ref{th.global-sg} and \ref{th.con-sg} for this case as well (with suitable modifications accordingly to the associated Moser-Trudinger inequality). On the other hand, for general $a>0$ the complete analysis is still missing and we can rely for example on \cite{j4} to get at least the existence results of Theorems \ref{th.local-sg}, \ref{th.global-sg}.
\end{remark}

\

We next consider the wave equation associated to some general Toda system,
\begin{equation}
\label{w-toda}
\p_t^2u_i-\Delta_gu_i=\sum_{j=1}^na_{ij}\,\rho_j\left(\frac{e^{u_j}}{\int_{M}e^{u_j}}-\frac{1}{|M|}\right) \quad \mbox{on }  M, \quad i=1,\dots,n,
\end{equation}
where $\rho_i$, $i=1,\dots,n$ are real parameters and $A_n=(a_{ij})_{n\times n}$ is the following rank $n$ Cartan matrix for $SU(n+1)$:
\begin{equation} \label{matr}
	{A}_n=\left(\begin{matrix}
2 & -1 & 0 & \cdots & 0 \\
-1 & 2 & -1 & \cdots & 0 \\
\vdots & \vdots &\vdots  & \ddots & \vdots\\
0 & \cdots & -1 & 2 & -1 \\
0 & \cdots &  0 & -1 & 2
\end{matrix}\right).
\end{equation}
The stationary equation related to \eqref{w-toda} is the following Toda system:
\begin{equation}
\label{toda}
-\Delta_gu_i=\sum_{j=1}^na_{ij}\,\rho_j\left(\frac{e^{u_j}}{\int_{M}e^{u_j}}-\frac{1}{|M|}\right), \quad i=1,\dots,n,
\end{equation}
which has been extensively studied since it has several applications both in mathematical physics and in geometry, for example non-abelian Chern-Simons theory \cite{d1, tar2, y} and holomorphic curves in $\mathbb{C}\mathbb{P}^n$ \cite{bw, cal, lwy-c}. There are by now many results concerning Toda-type systems in particular regarding existence of solutions \cite{bjmr, jkm, mr}, blow up analysis \cite{jlw, lwy} and classification issues \cite{lwy-c}.

\medskip

On the other hand, only partial results concerning the wave equation associated to the Toda system \eqref{w-toda} were obtained in \cite{cy} which we recall here. First, the local well-posedness of \eqref{w-toda} analogously as in Theorem \ref{th.local-sg} is derived for a general $n\times n$ symmetric matrix $A_n$. Second, by assuming $A_n$ to be a positive definite symmetric matrix with non-negative entries the authors were able to deduce a global existence result by exploiting a Moser-Trudinger-type inequality suitable for this setting, see \cite{sw}. On the other side, no results are available neither for mixed positive and negative entries of the matrix $A_n$ (which are relevant in mathematical physics and in geometry, see for example the above Toda system) nor for blow up criteria. Our aim is to complete the latter analysis.

\medskip

Before stating the results, let us fix some notation for the system setting. We denote the product space as $(H^1(M))^n=H^1(M)\times\dots\times H^1(M)$. To simplify the notation, to take into account an element $(u_1,\dots,u_n)\in (H^1(M))^n$ we will rather write $H^1(M)\ni\u: M\mapsto(u_1,\dots,u_n)\in\R^n$. With a little abuse of notation we will write $\int_{M} \u$ when we want to consider the integral of each component $u_i$, $i=1,\dots,n$.

\medskip

Since the local well-posedness of \eqref{w-toda} is already known from \cite{cy}, our first result concerns the global existence in time.
\begin{theorem}
\label{th.global-toda}
Suppose $\rho_i<4\pi$ for all $i=1,\dots,n.$ Then, for any $(\u_0,\u_1)\in H^1(M)\times L^2(M)$ such that $\int_{M}\u_1=0$, there exists a unique global solution
$$
	\u:\R^+\times M\to\mathbb{R}^n, \quad \u\in C(\R^+;H^1)\cap C^1(\R^+;L^2),
$$
of \eqref{w-toda} with initial data
$$
\left\{	\begin{array}{l}
					\u(0,\cdot)=\u_0, \\
					\partial_t \u(0,\cdot)=\u_1.
				\end{array} \right.
$$
\end{theorem}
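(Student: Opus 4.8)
The plan is to mimic the strategy used for Theorem \ref{th.global-sg}, namely: establish local well-posedness (for Toda this is already available from \cite{cy}), then show that the natural energy is conserved, and finally use a Moser-Trudinger-type inequality adapted to the Toda structure together with the conservation of energy to obtain an a priori bound on $\|\u(t,\cdot)\|_{H^1} + \|\p_t\u(t,\cdot)\|_{L^2}$ on any finite time interval, which rules out finite-time blow up and hence yields a global solution by a standard continuation argument. First I would record that, since $\int_M \u_1 = 0$ and by linearity of the equation (the right-hand side of each equation in \eqref{w-toda} has zero average by construction), the average $\int_M u_i(t,\cdot)$ is preserved in time, exactly as in \eqref{u-const}. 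This lets me work, after subtracting the (time-independent) average, in the space of zero-average functions where the Poincaré inequality and the Moser-Trudinger inequality are available.

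Next I would write down the conserved energy. Since $A_n$ is symmetric and positive definite, let $A_n^{-1} = (a^{ij})$ and define
$$
E(\u,\p_t\u) = \frac12 \sum_{i,j} a^{ij} \int_M \left( \p_t u_i \, \p_t u_j + \n u_i \cdot \n u_j \right)
- \sum_{i=1}^n \rho_i \log\left( \dashint_M e^{u_i} \right),
$$
(up to harmless constants), and check by differentiating in $t$ and using \eqref{w-toda} that $\frac{d}{dt}E = 0$ along solutions; the algebraic identity $\sum_i a^{ij}\big(\sum_k a_{ik}\rho_k(\cdots)\big) = \rho_j(\cdots)$ is what makes the cross terms cancel. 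Because $A_n^{-1}$ is positive definite, the quadratic part controls $\|\p_t\u\|_{L^2}^2 + \|\n\u\|_{L^2}^2$ from below up to a fixed constant.

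Then comes the key analytic input: the Moser-Trudinger inequality for the $SU(n+1)$ Toda system (see e.g. \cite{jw} / the inequality used in \cite{bjmr}), which states that there is $C>0$ such that for all $\u$ with $\int_M\u = 0$,
$$
\sum_{i=1}^n \log\left( \dashint_M e^{u_i} \right) \le \frac{1}{16\pi} \sum_{i,j} a^{ij}\int_M \n u_i\cdot\n u_j + C.
$$
Hence if $\rho_i < 4\pi$ for every $i$, pick $\eta>0$ small so that $\rho_i \le (1-\eta)\,4\pi$ for all $i$; combining the two displays gives
$$
E(\u,\p_t\u) \ge \frac12\sum_{i,j}a^{ij}\int_M \p_t u_i\p_t u_j + \frac{\eta}{2}\sum_{i,j}a^{ij}\int_M \n u_i\cdot\n u_j - C',
$$
so the conserved quantity $E$ coerces $\|\p_t\u(t)\|_{L^2}^2 + \|\n\u(t)\|_{L^2}^2$. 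Together with the conservation of the average this bounds $\|\u(t)\|_{H^1}^2 + \|\p_t\u(t)\|_{L^2}^2$ uniformly in $t$ by a constant depending only on the initial data and the $\rho_i$'s.

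Finally, a standard argument closes the proof: if the maximal existence time $T_{\max}$ were finite, the local existence theorem applied with initial data at a time close to $T_{\max}$ gives a uniform lower bound on the existence time depending only on the $H^1\times L^2$ norm, which by the a priori bound above is controlled; iterating extends the solution past $T_{\max}$, a contradiction. Uniqueness and continuous dependence are inherited from the local theory. I expect the main obstacle to be purely bookkeeping: verifying carefully that the Moser-Trudinger inequality available in the literature is exactly in the form needed (with the matrix $A_n^{-1}$ and the sharp constant giving the threshold $4\pi$ for each $\rho_i$), and making the energy-conservation computation rigorous at the level of $C_T(H^1)\cap C^1_T(L^2)$ regularity — that is, justifying the differentiation under the integral sign, which is handled as in the proof of Theorem \ref{th.global-sg} by a density/approximation argument rather than by any new idea.
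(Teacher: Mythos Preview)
Your proposal is correct and follows exactly the approach sketched in the paper (see Remark~\ref{rem-toda}): define the energy with the inverse Cartan matrix $A_n^{-1}$, verify its conservation as in Lemma~\ref{lem-const}, and combine it with the Jost--Wang Moser--Trudinger inequality \eqref{mt-toda} to obtain the a priori bound that feeds the standard continuation argument. One minor slip: the constant in your displayed Moser--Trudinger inequality should be $\tfrac{1}{8\pi}$ rather than $\tfrac{1}{16\pi}$ (indeed, your subsequent coercivity estimate with $\tfrac{\eta}{2}$ is already consistent with the correct constant and the $4\pi$ threshold).
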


The latter result follows by an energy argument and a Moser-Trudinger-type inequality for systems as obtained in \cite{jw}. On the other hand, when $\rho_i\geq4\pi$ for some $i$ the Moser-Trudinger inequality does not give any control and the solutions of \eqref{toda} might blow up. In the latter case, by exploiting improved versions of the Moser-Trudinger inequality for the system recently derived in \cite{b} we are able to give the following general blow up criteria.
\begin{theorem}
\label{th.blowup-toda}
Suppose $\rho_i\geq4\pi$ for some $i$. Let $(\u_0,\u_1)\in H^1(M)\times L^2(M)$ be such that $\int_{M}\u_1=0$, and let $\u$ be the solution of \eqref{w-toda}. Suppose that $\u$ exists in $[0,T_0)$ for some $T_0<\infty$ and it can not be extended beyond $T_0$. Then, there exists a sequence $t_k\to T_0^{-}$ such that
\begin{align*}
	\lim_{k\to+\infty} \max_j \|\nabla u_j(t_k,\cdot)\|_{L^2}=+\infty, \quad \lim_{k\to+\infty}\max_j\int_{M}e^{u_j(t_k,\cdot)}=+\infty.
\end{align*}
Furthermore, if $\rho_i\in[4m_i\pi,4(m_i+1)\pi)$ for some $m_i\in\mathbb{N}$, $i=1,\dots,n$, then there exists at least one index $j\in\{1,\dots,n\}$ and $m_j$ points $\{x_{j,1},\dots,x_{j,m_j}\}\in M$ such that for any $\varepsilon>0$,
$$\lim_{k\to\infty}\frac{\int_{\bigcup_{l=1}^{m_j}B(x_{j,l},\varepsilon)}e^{u_j(t_k,\cdot)}}{\int_{M}e^{u_j(t_k,\cdot)}}
\geq1-\varepsilon.$$
\end{theorem}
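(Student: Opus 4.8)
The plan is to mirror the strategy used for the sinh--Gordon equation \eqref{w-sg} (Theorem \ref{th.con-sg}), transposing each ingredient to the system setting. The key conserved quantity is the energy associated to \eqref{w-toda}. Writing $A_n^{-1}=(a^{ij})$ for the inverse Cartan matrix (which is positive definite with positive entries), one defines
\begin{equation*}
E(\u,\u_t)=\frac12\sum_{i,j}a^{ij}\int_M \left(\p_t u_i\,\p_t u_j+\n u_i\cdot\n u_j\right)
-\sum_i\rho_i\log\left(\dashint_M e^{u_i}\right),
\end{equation*}
and one checks, using $\int_M \u_1=0$ (hence $\int_M\u(t,\cdot)\equiv\int_M\u_0$, as in \eqref{u-const}) and the equation, that $E(\u(t,\cdot),\u_t(t,\cdot))$ is constant in $t\in[0,T_0)$. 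First I would record this conservation law and the fact that, because the problem is variational with Lagrangian built from $A_n^{-1}$, the relevant functional on the stationary side is precisely the Toda functional whose coercivity/Moser--Trudinger theory is known. Arguing by contradiction, suppose the solution cannot be extended past $T_0<\infty$ but $\liminf_{t\to T_0^-}\max_j\|\n u_j(t,\cdot)\|_{L^2}<\infty$ along \emph{every} sequence, i.e. $\sup_{[0,T_0)}\max_j\|\n u_j\|_{L^2}<\infty$. Poincaré plus the fixed average then bounds $\|\u(t,\cdot)\|_{H^1}$ uniformly; energy conservation bounds $\sum_i\rho_i\log\dashint_M e^{u_i}$, and combined with Jensen this controls each $\log\dashint_M e^{u_i}$ from above, hence $\int_M e^{u_i(t,\cdot)}$ is uniformly bounded (and bounded below by Jensen). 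Feeding these bounds into the energy functional controls $\|\u_t(t,\cdot)\|_{L^2}$ uniformly as well. Thus $(\u,\u_t)$ stays in a bounded set of $H^1\times L^2$ up to $T_0$, and the local existence time in Theorem \ref{th.local-sg} (system version, from \cite{cy}) depends only on this norm, so the solution extends past $T_0$ — a contradiction. This gives the first assertion: along some $t_k\to T_0^-$, $\max_j\|\n u_j(t_k,\cdot)\|_{L^2}\to\infty$. The statement $\max_j\int_M e^{u_j(t_k,\cdot)}\to\infty$ follows along the same sequence (after passing to a further subsequence): if all $\int_M e^{u_j(t_k,\cdot)}$ stayed bounded, energy conservation would bound $\sum a^{ij}\int\n u_i\cdot\n u_j$, hence each $\|\n u_j(t_k,\cdot)\|_{L^2}$, contradicting the blow up of the gradient.

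The second, finer assertion is where the improved Moser--Trudinger inequality for the Toda system enters. I would invoke the improved inequality from \cite{b}: if $\rho_i<4(m_i+1)\pi$ for all $i$, then for every $\varepsilon>0$ and every choice of finitely many configurations of points there is a constant $C_\varepsilon$ such that, whenever $\u\in H^1$ fails (for \emph{every} index $j$) to concentrate $e^{u_j}$-mass on $m_j$ small balls in the sense of the desired inequality, one has the bound
\begin{equation*}
\sum_i\rho_i\log\dashint_M e^{u_i}\le \frac12\sum_{i,j}a^{ij}\int_M\n u_i\cdot\n u_j+C_\varepsilon.
\end{equation*}
Now run the blow up sequence $t_k\to T_0^-$ from the first part. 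Suppose, for contradiction, that along \emph{no} subsequence and for \emph{no} index $j$ does the $m_j$-ball concentration inequality hold with the given $\varepsilon$; equivalently, passing to a subsequence, for all large $k$ and all $j$ the concentration fails. Then the improved inequality applies to $\u(t_k,\cdot)$, and together with energy conservation it yields a uniform bound on $\sum a^{ij}\int_M\n u_i(t_k)\cdot\n u_j(t_k)$, hence on each $\|\n u_j(t_k,\cdot)\|_{L^2}$ — contradicting the divergence of the gradient established in the first part. Therefore there is an index $j$ (after extracting a subsequence, a fixed $j$, since there are finitely many indices) and points $x_{j,1},\dots,x_{j,m_j}$ for which the concentration inequality holds along $t_k$, which is the claim. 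A small technical point: the improved inequality in \cite{b} is typically stated with the points varying; one fixes them by a covering/compactness argument on $M$, exactly as is done for the scalar case behind Theorem \ref{th.con-sg}, so that the same $m_j$ points work for a subsequence for all $\varepsilon$ simultaneously (diagonal extraction over $\varepsilon=1/\ell$).

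The main obstacle I anticipate is organizing the bookkeeping of the two-sided estimates when the Cartan matrix is not diagonal: the energy couples the components through $A_n^{-1}$, so one cannot treat the $u_i$ independently, and one must be careful that ``gradient blows up'' for the coupled quadratic form $\sum a^{ij}\int\n u_i\cdot\n u_j$ is equivalent to ``$\max_j\|\n u_j\|_{L^2}$ blows up'' — this uses positive definiteness of $A_n^{-1}$ in one direction and a crude norm comparison in the other. The second genuine difficulty is extracting the precise form of the improved Moser--Trudinger inequality from \cite{b} in a way that is uniform enough to be iterated along the blow up sequence; in particular one needs the dichotomy ``either $\sum a^{ij}\int\n u_i\cdot\n u_j$ is controlled by the nonlinear term up to a constant, or some $e^{u_j}$ concentrates on $m_j$ points,'' which is exactly the content of the improved inequality but must be quoted in the right quantitative shape. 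Everything else — energy conservation, the contradiction with the local existence time, the Poincaré and Jensen steps — is routine and parallels the proof of Theorem \ref{th.con-sg}.
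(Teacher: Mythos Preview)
Your overall strategy mirrors the paper's: conserve the energy built with $A_n^{-1}$, contradict non-extendability to get blow up of the gradient and of $\max_j\int_M e^{u_j}$, then use the improved Moser--Trudinger inequality from \cite{b} for the concentration statement. Two points need repair, one minor and one substantive.

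\medskip

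\emph{First part (minor circularity).} In the contradiction argument you write ``energy conservation bounds $\sum_i\rho_i\log\dashint_M e^{u_i}$'' and then feed this back to control $\|\u_t\|_{L^2}$. But energy conservation reads $E(0)=\tfrac12 Q(\p_t\u)+\tfrac12 Q(\n\u)-\sum_i\rho_i\log\dashint_M e^{u_i}$, so bounding the nonlinear sum from above already requires a bound on $Q(\p_t\u)$, which is what you are after. The fix is immediate: from the assumed bound on $\|\n u_j\|_{L^2}$, apply the \emph{standard} Moser--Trudinger inequality \eqref{mt} to bound each $\int_M e^{u_j}$ directly; then energy conservation controls $Q(\p_t\u)$. (The paper in fact reverses your order: it first assumes all $\int_M e^{u_j}$ bounded, deduces bounds on $Q(\n\u)$ and $Q(\p_t\u)$ from energy conservation, reaches the contradiction, and only afterwards reads off $\|\n u_j\|_{L^2}\to\infty$ via \eqref{mt}.)

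\medskip

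\emph{Second part (genuine gap).} The inequality you quote from the improved Moser--Trudinger theory,
\[
\sum_i\rho_i\log\dashint_M e^{u_i}\;\le\;\frac12\sum_{i,j}a^{ij}\int_M\n u_i\cdot\n u_j+C_\varepsilon,
\]
is just $J_\rho(\u)\ge -C_\varepsilon$. Combined with $E(0)=\tfrac12 Q(\p_t\u)+J_\rho(\u)$ this bounds $Q(\p_t\u)$, \emph{not} $Q(\n\u)$; so your claimed contradiction with $\|\n u_j(t_k)\|_{L^2}\to\infty$ does not follow. What is missing is a gain: either state the improved inequality with a coefficient strictly smaller than $\tfrac12$ in front of $Q(\n\u)$ (this is available since $\rho_i<4(m_i+1)\pi$ strictly, and is how Proposition~\ref{prop-toda} is proved), or --- and this is what the paper does --- pick $\tilde\rho_i>\rho_i$ with $\tilde\rho_i\in(4m_i\pi,4(m_i+1)\pi)$ and show
\[
J_{\tilde\rho}(\u(t_k,\cdot))\;\le\; E(0)+C-\sum_i(\tilde\rho_i-\rho_i)\log\int_M e^{u_i(t_k,\cdot)-\bar u_i}\;\longrightarrow\;-\infty,
\]
using your first-part conclusion $\max_j\int_M e^{u_j(t_k,\cdot)}\to\infty$ together with Jensen for the remaining components. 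Then Proposition~\ref{prop-toda} (stated for the open interval, hence applied to $\tilde\rho$) delivers the concentration. Without this $\tilde\rho$ perturbation (or the equivalent gain), the argument does not close.
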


Therefore, for the finite time blow up solutions to \eqref{w-toda} there exists at least one component $u_j$ such that the measure $e^{u(t_k)}$ (after normalization) concentrates around (at most) $m_j$ points on $M$. One can compere this result with the on for the sinh-Gordon equation \eqref{w-sg} or the mean field equation \eqref{w-mf}, see Theorem \ref{th.con-sg} and Corollary \ref{cr1.1}, respectively.

\medskip

Finally, we have the following possible generalization of the system \eqref{w-toda}.
\begin{remark}
We point out that since the improved versions of the Moser-Trudinger inequality in \cite{b} hold for general symmetric, positive definite matrices $A_n=(a_{ij})_{n\times n}$ with non-positive entries outside the diagonal, we can derive similar existence results and blow up criteria as in Theorems \ref{th.global-toda}, \ref{th.blowup-toda}, respectively, for this general class of matrices as well. In particular, after some simple transformations (see for example the introduction in \cite{ajy}) we may treat the following Cartan matrices:
$$
B_n=\left(\begin{matrix}
2 & -1 & 0 & \cdots & 0 \\
-1 & 2 & -1 & \cdots & 0 \\
\vdots & \vdots &\vdots  & \ddots & \vdots\\
0 & \cdots & -1 & 2 & -2 \\
0 & \cdots &  0 & -1 & 2
\end{matrix}\right),
\quad  {C}_n=\left(\begin{matrix}
2 & -1 & 0 & \cdots & 0 \\
-1 & 2 & -1 & \cdots & 0 \\
\vdots & \vdots &\vdots  & \ddots & \vdots\\
0 & \cdots & -1 & 2 & -1 \\
0 & \cdots &  0 & -2 & 2
\end{matrix}\right), \vspace{0.2cm}
$$
$$
{G}_2=\left(\begin{matrix}
2&-1\\
-3&2
\end{matrix}\right),
$$
which are relevant in mathematical physics, see for example \cite{d1}. To simplify the presentation we give the details just for the matrix $A_n$ in \eqref{matr}.
\end{remark}

\medskip

The paper is organized as follows. In section \ref{sec:prelim} we collect some useful results and in section \ref{sec:proof} we prove the main results of this paper: local well-posedness, global existence and blow up criteria.

\medskip

\section{Preliminaries} \label{sec:prelim}
In this section we collect some useful results concerning the stationary sinh-Gordon equation \eqref{sg}, Toda system \eqref{toda} and the solutions of wave equations which will be used in the proof of the main results in the next section.

\medskip

In the sequel the symbol $\ov u$ will denote the average of $u$, that is
$$
\ov u= \fint_{M} u=\frac{1}{|M|}\int_{M} u.
$$
Let us start by recalling the well-known Moser-Trudinger inequality
\begin{equation} \label{mt}
	8\pi \log\int_{M} e^{u-\ov u} \leq \frac 12 \int_{M} |\n u|^2 + C_{(M,g)}\,, \quad u\in H^1(M).
\end{equation}
For the sinh-Gordon equation \eqref{sg}, a similar sharp inequality was obtained in \cite{os},
\begin{equation} \label{mt-sg}
8\pi \left(\log\int_{M} e^{u-\ov u} + \log\int_{M} e^{-u+\ov u}\right) \leq \frac 12 \int_{M} |\n u|^2 + C_{(M,g)}\,,\quad u\in H^1(M).
\end{equation}
We recall now some of main features concerning the variational analysis of the sinh-Gordon equation \eqref{sg} recently derived in \cite{bjmr}, which will be exploited later on. First of all, letting $\rho_1, \rho_2\in\R$ the associated Euler-Lagrange functional for equation \eqref{sg} is given by $J_{\rho_1,\rho_2}:H^1(M)\to \R$,
\begin{equation}
\label{functional-sg}
J_{\rho_1,\rho_2}(u)=\frac12\int_{M}|\nabla u|^2-\rho_1\log\int_{M}e^{u-\overline{u}} -\rho_2\log\int_{M}e^{-u+\overline{u}}.
\end{equation}
Observe that if $\rho_1, \rho_2\leq8\pi$, by \eqref{mt-sg} we readily have
$$
J_{\rho_1,\rho_2}(u)\geq -C,
$$
for any $u\in H^1(M)$, where $C>0$ is a constant independent of $u$. On the other hand, as soon as $\rho_i>8\pi$ for some $i=1,2$ the functional $J_{\rho_1,\rho_2}$ is unbounded from below. To treat the latter super-critical case one needs improved versions of the Moser-Trudinger inequality \eqref{mt-sg} which roughly assert that the more the measures $e^u, e^{-u}$ are spread over the surface the bigger is the constant in the left hand side of \eqref{mt-sg}. More precisely, we have the following result.
\begin{proposition} \emph{(\cite{bjmr})} \label{prop-sg-mt}
Let $\d, \th>0$, $k,l\in\N$ and $\{\o_{1,i},\o_{2,j}\}_{i\in\{1,\dots,k\},j\in\{1,\dots,l\}}\subset M$ be such that
\begin{align*}
	& d(\o_{1,i},\o_{1,i'})\ge\d,\quad \forall \, i,i'\in\{1,\dots,k\}, \, i\ne i', \\
	& d(\o_{2,j},\o_{2,j'})\ge\d,\quad \forall \, j,j' \in\{1,\dots,l\}, \, j\ne j'.
\end{align*}
Then, for any $\e>0$ there exists $C=C\left(\e,\d,\th,k,l,M\right)$ such that if $u\in H^1(M)$ satisfies
\begin{align*}
\int_{\o_{1,i}} e^{u} \ge \th\int_{M} e^{u}, \,\, \forall i\in\{1,\dots,k\},
\qquad \int_{\o_{2,j}} e^{-u} \ge \th\int_{M} e^{-u}, \,\, \forall j\in\{1,\dots,l\},
\end{align*}
it follows that
$$
8k\pi\log\int_{M} e^{u-\ov{u}}+8l\pi\log\int_{M} e^{-u+\ov u}\leq \frac{1+\e}{2}\int_{M} |\n u|^2\,dV_g+C.
$$
\end{proposition}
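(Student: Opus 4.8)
The plan is to deduce the inequality from \emph{localized} versions of \eqref{mt} and \eqref{mt-sg}, summed over a well-chosen family of pairwise disjoint balls. Since every quantity involved is invariant under $u\mapsto u+\mathrm{const}$, I would first normalize $\ov u=0$, so that $\log\int_M e^{\pm u}=\log\int_M e^{\pm(u-\ov u)}$ and, by Poincar\'e, $\|u\|_{L^2}\le C\|\n u\|_{L^2}$. The tool is the standard localized form of \eqref{mt}: for concentric balls $B\Subset\widehat B$ in $M$ there is $C=C(\widehat B)$ with
$$
8\pi\log\Big(\dashint_{B}e^{\,w-\dashint_{\widehat B}w}\Big)\le\frac12\int_{\widehat B}|\n w|^2+C,\qquad w\in H^1(M),
$$
together with its two-measure analogue coming from \eqref{mt-sg},
$$
8\pi\Big(\log\dashint_{B}e^{\,w-\dashint_{\widehat B}w}+\log\dashint_{B}e^{\,-w+\dashint_{\widehat B}w}\Big)\le\frac12\int_{\widehat B}|\n w|^2+C.
$$

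Next I would organize the regions into pairwise well-separated clusters. First, replacing each $\o_{1,i}$ (resp. $\o_{2,j}$) by a sub-ball of radius $\ll\d$ carrying a comparable fraction of $\int_M e^{u}$ (resp. $\int_M e^{-u}$) — which only enlarges $C$ — we may assume the regions are small balls. Since $\{\o_{1,i}\}$ and $\{\o_{2,j}\}$ are $\d$-separated within each family, the bipartite ``closeness graph'' on $\{\o_{1,i}\}\cup\{\o_{2,j}\}$, with an edge whenever two regions from different families are $\d/10$-close, has maximum degree one: if some $\o_{2,j}$ were $\d/10$-close to two distinct $\o_{1,i},\o_{1,i'}$, the latter would be closer than $\d$, a contradiction. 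Hence, after relabeling, the regions split into matched pairs $(\o_{1,r},\o_{2,r})$, $r=1,\dots,p$, plus unmatched $\o_{1,i}$ ($i=p+1,\dots,k$) and $\o_{2,j}$ ($j=p+1,\dots,l$); one checks that the clusters $\o_{1,r}\cup\o_{2,r}$, $\o_{1,i}$, $\o_{2,j}$ are pairwise separated by a fixed fraction of $\d$, hence admit pairwise disjoint neighbourhoods $\widehat\o_\bullet$.

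Then, on each cluster, I would apply the appropriate local inequality. On a matched pair the two-measure estimate on $\widehat\o_r$, combined with $\int_{\o_{1,r}}e^u\ge\th\int_M e^u$ and $\int_{\o_{2,r}}e^{-u}\ge\th\int_M e^{-u}$, gives
$$
8\pi\Big(\log\int_M e^{u}+\log\int_M e^{-u}\Big)\le\frac12\int_{\widehat\o_r}|\n u|^2+C,
$$
the point being that the two $\dashint_{\widehat\o_r}u$ correction terms cancel against each other. On an unmatched $\o_{1,i}$ the one-sided estimate on $\widehat\o_{1,i}$ gives $8\pi\log\int_M e^{u}\le\frac12\int_{\widehat\o_{1,i}}|\n u|^2+8\pi\dashint_{\widehat\o_{1,i}}u+C$, where the leftover linear term is, by $\ov u=0$, Poincar\'e and Young, at most $\e\int_M|\n u|^2+C_\e$; symmetrically for unmatched $\o_{2,j}$ with $e^{-u}$. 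Summing the $p$ matched-pair inequalities and the $k-p$, $l-p$ unmatched ones, the left-hand sides add to exactly $8k\pi\log\int_M e^{u-\ov u}+8l\pi\log\int_M e^{-u+\ov u}$, while disjointness of the $\widehat\o_\bullet$ yields $\sum_\bullet\int_{\widehat\o_\bullet}|\n u|^2\le\int_M|\n u|^2$; running the argument with $\e/(2(k+l+1))$ in place of $\e$ then gives the stated constant $\frac{1+\e}{2}$.

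The main obstacle is the sharp localized Moser-Trudinger inequalities used above: one must keep the constant $\frac12\int|\n w|^2$ — not a worse, ball-dependent one — at the cost only of enlarging the ball slightly and of an $O(\|\n w\|_{L^2})$ correction (this is precisely what a naive cut-off argument fails to provide, as it generates a quadratic error $\int w^2|\n\varphi|^2$), along with the two-measure version of the same. Granting these — which are by now standard, see \cite{os,bjmr} and the references therein — what remains is the bookkeeping described above.
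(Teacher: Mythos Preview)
The paper does not give a proof of this proposition; it is imported from \cite{bjmr} and used as a black box. Your outline is the standard Chen--Li-type argument adapted to the two-measure (sinh-Gordon) setting, which is precisely what \cite{bjmr} carries out: localize the sharp Moser--Trudinger inequalities \eqref{mt}, \eqref{mt-sg} on pairwise disjoint enlarged neighbourhoods of the mass-carrying sets, sum, and absorb the leftover averages by Poincar\'e and Young. The bipartite ``closeness graph'' with maximum degree one is a clean device to organize the $\o_{1,i}$--$\o_{2,j}$ interactions; the cancellation of the two $\fint_{\widehat\o_r}u$ corrections on a matched pair is exactly why one needs the genuine two-measure local inequality there rather than two one-sided ones (which would double the Dirichlet term). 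Your counting --- $p$ matched pairs plus $(k-p)+(l-p)$ singletons giving $8k\pi+8l\pi$ on the left --- is correct.

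You are also right that the only real analytic input is the \emph{sharp} localized form of \eqref{mt} and \eqref{mt-sg}, i.e.\ keeping the constant $\tfrac12$ on $\int_{\widehat\o}|\n u|^2$ while paying only an $\e$-loss and an additive $C$. A naive cutoff indeed produces a quadratic error of the wrong size; the standard remedy (dyadic pigeonhole on nested annuli to find a transition shell with small Dirichlet energy, then local Poincar\'e on that shell) is what underlies the references you cite. Granting that, your argument is complete and matches the source.
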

From the latter result one can deduce that if the $J_{\rho_1,\rho_2}(u)$ is large negative at least one of the two measures $e^u, e^{-u}$ has to concentrate around some points of the surface.
\begin{proposition} \emph{(\cite{bjmr})} \label{prop-sg}
Suppose $\rho_i\in(8m_i\pi,8(m_i+1)\pi)$ for some $m_i\in\N$, $i=1,2$ ($m_i\geq 1$ for some $i=1,2$). Then, for any $\varepsilon, r>0$ there exists $L=L(\varepsilon,r)\gg1$ such that for any $u\in H^1(M)$ with $J_{\rho_1,\rho_2}(u)\leq-L$, there are either some $m_1$ points $\{x_1,\dots,x_{m_1}\}\subset M$ such that
$$
\frac{\int_{\cup_{l=1}^{m_1}B_r(x_l)}e^{u}}{\int_{M}e^u}\geq 1-\varepsilon,
$$
or some $m_2$ points $\{y_1,\dots,y_{m_2}\}\subset M$ such that
$$
\frac{\int_{\cup_{l=1}^{m_2}B_r(y_l)}e^{-u}}{\int_{M}e^{-u}}\geq 1-\varepsilon.
$$
\end{proposition}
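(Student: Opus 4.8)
The plan is to argue the contrapositive, combining the improved Moser--Trudinger inequality of Proposition \ref{prop-sg-mt} with a covering lemma for measures. Fix $\varepsilon, r>0$. It suffices to produce a constant $C_0=C_0(\varepsilon,r)$ such that every $u\in H^1(M)$ for which \emph{neither} concentration alternative holds satisfies $J_{\rho_1,\rho_2}(u)\geq -C_0$; the proposition then follows with $L(\varepsilon,r)=C_0+1$.

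So let $u\in H^1(M)$ be such that for every choice of $m_1$ points $\{x_l\}\subset M$ one has $\int_{\cup_l B_r(x_l)}e^{u}<(1-\varepsilon)\int_M e^{u}$, and for every choice of $m_2$ points $\{y_l\}\subset M$ one has $\int_{\cup_l B_r(y_l)}e^{-u}<(1-\varepsilon)\int_M e^{-u}$. The first step is a standard covering lemma (in the spirit of \cite{bjmr}): for the given $\varepsilon,r$ there are $\d=\d(\varepsilon,r)>0$ and $\th=\th(\varepsilon,r)>0$ so that for any probability measure $\mu$ on $M$ which does not satisfy $\mu\big(\cup_{l=1}^{m}B_r(z_l)\big)\geq 1-\varepsilon$ for some $m$-tuple $\{z_l\}$, there exist $m+1$ points $p_1,\dots,p_{m+1}\in M$ with $d(p_i,p_j)\geq 3\d$ for $i\neq j$ and $\mu(B_\d(p_i))\geq\th$ for all $i$. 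Applying this to $\mu=e^{u}/\int_M e^{u}$ with $m=m_1$ and to $\mu=e^{-u}/\int_M e^{-u}$ with $m=m_2$ (and passing to a common $\d,\th$ by taking minima) produces sets $\{\o_{1,i}\}_{i=1}^{m_1+1}$, $\{\o_{2,j}\}_{j=1}^{m_2+1}$, namely balls of radius $\d$, meeting exactly the separation and mass hypotheses of Proposition \ref{prop-sg-mt} with $k=m_1+1$, $l=m_2+1$.

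Now fix a parameter $\hat\varepsilon>0$ to be chosen later, and write $a=\log\int_M e^{u-\ov u}$, $b=\log\int_M e^{-u+\ov u}$. Proposition \ref{prop-sg-mt} gives
$$
8(m_1+1)\pi\, a + 8(m_2+1)\pi\, b \leq \frac{1+\hat\varepsilon}{2}\int_M|\n u|^2 + C,
$$
with $C=C(\hat\varepsilon,\d,\th,m_1,m_2,M)$. By Jensen's inequality $a,b\geq\log|M|$, hence $a^{+}\leq a+C_M$ and $b^{+}\leq b+C_M$ for some $C_M$ depending only on $M$. Since $\rho_1,\rho_2>0$ and $\tau:=\max\{\rho_1/(8(m_1+1)\pi),\,\rho_2/(8(m_2+1)\pi)\}<1$ by hypothesis, the displayed inequality yields
$$
\rho_1 a + \rho_2 b \leq \rho_1 a^{+} + \rho_2 b^{+} \leq \tau\big(8(m_1+1)\pi\, a^{+} + 8(m_2+1)\pi\, b^{+}\big) \leq \tau\left(\frac{1+\hat\varepsilon}{2}\int_M|\n u|^2+C'\right),
$$
with $C'=C+8\pi(m_1+m_2+2)C_M$. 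Therefore
$$
J_{\rho_1,\rho_2}(u)=\frac12\int_M|\n u|^2-\rho_1 a-\rho_2 b\geq \frac{1-\tau(1+\hat\varepsilon)}{2}\int_M|\n u|^2 - \tau C'.
$$
Choosing $\hat\varepsilon$ small enough, depending only on $\rho_1,\rho_2,m_1,m_2$, that $\tau(1+\hat\varepsilon)<1$ makes the Dirichlet-energy coefficient nonnegative, whence $J_{\rho_1,\rho_2}(u)\geq -\tau C'=:-C_0$, with $C_0$ depending only on $\varepsilon,r$ (and the fixed data $\rho_1,\rho_2,m_1,m_2,M$). This is the bound claimed at the start.

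The step demanding the most care is the covering lemma together with the bookkeeping that the resulting $\d,\th$ — and hence $C_0$ — depend only on $\varepsilon,r$ and not on $u$: one must extract, uniformly over all non-concentrating probability measures, finitely many balls of a fixed radius, separated by a fixed distance, each carrying at least a fixed fraction of the mass, so that Proposition \ref{prop-sg-mt} can be invoked with uniform constants. The remaining manipulation is the elementary convexity estimate above, which uses only the strict inequalities $\rho_i<8(m_i+1)\pi$ together with $\rho_i>0$.
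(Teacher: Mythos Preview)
The paper does not give a proof of this proposition; it is quoted from \cite{bjmr}. Your sketch is correct and is essentially the argument one finds there (and in related works on Liouville-type problems): contrapose, use a Chen--Li-type covering lemma to upgrade ``non-concentration around $m_i$ points'' to ``$(m_i+1)$ well-separated balls each carrying a fixed fraction of the mass,'' and then invoke the improved Moser--Trudinger inequality (Proposition~\ref{prop-sg-mt}) with $k=m_1+1$, $l=m_2+1$ to bound $J_{\rho_1,\rho_2}$ from below. Your bookkeeping with $a^+,b^+$ and $\tau<1$ is a clean way to absorb the cases where one of the $m_i$ may vanish and to exploit only the strict upper bounds $\rho_i<8(m_i+1)\pi$; note that $\rho_i>0$ (needed for $\rho_i a\le\rho_i a^+$) follows from $\rho_i\in(8m_i\pi,8(m_i+1)\pi)$ with $m_i\in\N$. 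The one place you correctly flag as needing care---uniformity of $\delta,\theta$ in the covering lemma over all non-concentrating probability measures---is exactly where the substance lies, and it is handled in \cite{bjmr} (and earlier in the mean field literature) by a compactness/covering argument on $M$; once that is in place, the constant $C$ from Proposition~\ref{prop-sg-mt} indeed depends only on $\varepsilon,r$ through $\delta,\theta$, giving the desired uniform lower bound.
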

We next briefly recall some variational aspects of the stationary Toda system \eqref{toda}. Recall the matrix $A_n$ in \eqref{matr} and the notation of $\u$ introduced before Theorem~\ref{th.global-toda} and write $\rho=(\rho_1,\dots,\rho_n)$. The associated functional for the system \eqref{toda} is given by $J_{\rho}:H^1(M)\to \R$,
\begin{equation}
\label{functional-toda}
J_{\rho}(\u)=\frac12\int_{M}\sum_{i,j=1}^n a^{ij}\langle\nabla u_i,\nabla u_j\rangle
-\sum_{i=1}^n\rho_i\log\int_{M}e^{u_i-\overline{u}_i},
\end{equation}
where $(a^{ij})_{n\times n}$ is the inverse matrix $A_n^{-1}$ of $A_n$. A Moser-Trudinger inequality for \eqref{functional-toda} was obtained in \cite{jw}, which asserts that
\begin{equation} \label{mt-toda}
J_{\rho}(\u)\geq C,
\end{equation}
for any $\u\in H^1(M)$, where $C$ is a constant independent of $\u$, if and only if $\rho_i\leq 4\pi$ for any $i=1,\dots,n.$ In particular, if $\rho_i>4\pi$ for some $i=1,\dots,n$ the functional $J_\rho$ is unbounded from below. As for the sinh-Gordon equation \eqref{sg} we have improved versions of the Moser-Trudinger inequality \eqref{mt-toda} recently derived in \cite{b} (see also \cite{bjmr}) which yield concentration of the measures $e^{u_j}$ whenever $J_{\rho}(\u)$ is large negative.
\begin{proposition} \emph{(\cite{b, bjmr})} \label{prop-toda}
Suppose $\rho_i\in(4m_i\pi,4(m_i+1)\pi)$ for some $m_i\in\N,~i=1,\dots,n$ ($m_i\geq1$ for some $i=1,\dots,n$). Then, for any $\varepsilon, r>0$ there exists $L=L(\varepsilon,r)\gg 1$ such that for any $\u\in H^1(M)$ with $J_{\rho}(\u)\leq-L$, there exists at least one index $j\in\{1,\dots,n\}$ and $m_j$ points $\{x_1,\dots,x_{m_j}\}\subset M$ such that
$$
\frac{\int_{\cup_{l=1}^{m_j}B_r(x_l)}e^{u_j}}{\int_{M}e^{u_j}}\geq 1-\varepsilon.
$$
\end{proposition}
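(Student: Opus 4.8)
The plan is to obtain the Proposition by a contradiction argument that runs exactly parallel to the proof of Proposition~\ref{prop-sg}, using two inputs. The first input is the improved Moser--Trudinger inequality for the Toda system proved in \cite{b} (the system analogue of Proposition~\ref{prop-sg-mt}): given $\d,\th>0$ and $k_1,\dots,k_n\in\N$, for every $\e>0$ there is a constant $C=C(\e,\d,\th,k_1,\dots,k_n,M)$ such that if $\u\in H^1(M)$ and for each $i$ the measure $e^{u_i}$ has mass at least $\th\int_M e^{u_i}$ on each of $k_i$ pairwise $\d$-separated subsets of $M$, then
\[
\sum_{i=1}^n 4k_i\pi\log\int_M e^{u_i-\overline{u}_i}\le\frac{1+\e}{2}\int_M\sum_{i,j=1}^n a^{ij}\langle\nabla u_i,\nabla u_j\rangle\,dV_g+C.
\]
The second input is the standard localization (covering) lemma, as used in \cite{bjmr}: for fixed $\e,r>0$ and $m\in\N$ there is $\bar\e=\bar\e(\e,r,m)>0$ such that any nonnegative $f\in L^1(M)$ with $\int_M f=1$ either satisfies $\int_{\cup_{l=1}^m B_r(x_l)}f\ge 1-\e$ for some $m$ points $x_1,\dots,x_m$, or admits $m+1$ points $p_1,\dots,p_{m+1}$ with $B_{2r}(p_l)$ pairwise disjoint and $\int_{B_r(p_l)}f\ge\bar\e$ for every $l$. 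I take these two statements for granted; granting them, the rest is soft.

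Suppose the conclusion fails for some $\u$ with $J_\rho(\u)\le -L$. Then for \emph{every} index $i$ the normalized density $e^{u_i}/\int_M e^{u_i}$ satisfies the second alternative of the covering lemma with $m=m_i$, producing $k_i:=m_i+1$ pairwise $\d$-separated sets (one may take $\d=2r$), each of $e^{u_i}$-mass at least $\th\int_M e^{u_i}$, with $\th:=\min_i\bar\e(\e,r,m_i)>0$. Feeding this configuration into the first input, bounding each term $\log\int_M e^{u_i-\overline{u}_i}\ge\log|M|$ from below by Jensen's inequality, and using that $\rho_i<4(m_i+1)\pi=4k_i\pi$, whence $\sigma:=\max_i\rho_i/(4k_i\pi)<1$, I obtain
\[
\sum_{i=1}^n\rho_i\log\int_M e^{u_i-\overline{u}_i}\le\sigma\,\frac{1+\e}{2}\int_M\sum_{i,j=1}^n a^{ij}\langle\nabla u_i,\nabla u_j\rangle\,dV_g+C.
\]
Since $A_n^{-1}=(a^{ij})$ is positive definite, the quadratic form $\int_M\sum_{i,j} a^{ij}\langle\nabla u_i,\nabla u_j\rangle$ is comparable to $\sum_i\|\nabla u_i\|_{L^2}^2\ge 0$; choosing $\e$ small enough that $\sigma(1+\e)<1$ and recalling the definition \eqref{functional-toda} of $J_\rho$ yields
\[
J_\rho(\u)\ge\frac{1-\sigma(1+\e)}{2}\int_M\sum_{i,j=1}^n a^{ij}\langle\nabla u_i,\nabla u_j\rangle\,dV_g-C\ge -C,
\]
which contradicts $J_\rho(\u)\le -L$ as soon as $L>C$. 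This proves the Proposition, with $L=L(\e,r)$ equal to the constant $C$ just produced.

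The only substantial point is the first input: upgrading the plain inequality \eqref{mt-toda} to one with the enlarged coefficients $4k_i\pi$ under the spreading hypothesis. This is a macroscopic localization and scaling argument — one covers $M$ by the $\d$-separated regions, runs on each region an Onofri/Moser--Trudinger-type estimate for the linear combination of the components singled out by $A_n^{-1}$, and reassembles the local estimates while keeping the off-diagonal terms $a^{ij}\langle\nabla u_i,\nabla u_j\rangle$ ($i\ne j$) under control — and it is carried out in \cite{b} (see also \cite{bjmr}). The covering lemma is elementary and the remaining packaging above is essentially the same as in the sinh-Gordon case of Proposition~\ref{prop-sg}; the main obstacle, and the reason the statement is quoted from \cite{b,bjmr} rather than reproved here, lies entirely in that first input.
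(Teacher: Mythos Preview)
The paper does not supply its own proof of this proposition: it is quoted from \cite{b, bjmr} as a known result and used as a black box. Your sketch is correct and is precisely the argument carried out in those references---an improved Moser--Trudinger inequality under a spreading hypothesis, a covering/localization lemma providing the spreading when concentration fails, and a contradiction via the coercivity of $J_\rho$---so your approach coincides with the one in the cited sources rather than differing from anything in the present paper.
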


\medskip

Finally, let us state a standard result concerning the wave equation, that is the Duhamel principle. Let us first recall that every function in $L^2(M)$ can be decomposed as a convergent sum of eigenfunctions of the Laplacian $\Delta_g$ on $M$. Then, one can define the operators $\cos(\sqrt{-\Delta_g})$ and $\frac{\sin(\sqrt{-\Delta_g})}{\sqrt{-\Delta_g}}$ acting on $L^2(M)$ using the spectral theory. Consider now the initial value problem
\begin{equation}
\label{wave-eq}
\left\{\begin{array}{l}
\partial_t^2v-\Delta_gv=f(t,x),\vspace{0.2cm}\\
v(0,\cdot)=u_0,\quad \partial_tv(0,\cdot)=u_1,
\end{array}\right.
\end{equation}
on $[0,+\infty)\times M$. Recall the notation of $C_T(X)$ and $\u$ before Theorems \ref{th.local-sg} and \ref{th.global-toda}, respectively. Then, the following Duhamel formula holds true.
\begin{proposition}
\label{pr2.du}
Let $T>0$, $(u_0,u_1)\in H^1(M)\times L^2(M)$ and let $f\in L^1_T(L^2(M))$. Then, \eqref{wave-eq} has a unique solution
$$
	v:[0,T)\times M\to\mathbb{R}, \quad v\in C_T(H^1)\cap C_T^1(L^2),
$$
given by
\begin{align}
\label{2.wave-ex}
v(t,x)=\cos\left(t\sqrt{-\Delta_g}\right)u_0+\frac{\sin(t\sqrt{-\Delta_g})}{\sqrt{-\Delta_g}}\,u_1+
\int_0^t\frac{\sin\bigr((t-s)\sqrt{-\Delta_g}\bigr)}{\sqrt{-\Delta_g}}\,f(s)\,\mathrm{d}s.
\end{align}
Furthermore, it holds
\begin{equation}
\|v\|_{C_T(H^1)}+\|\partial_tv\|_{C_T(L^2)}\leq 2\left(\|u_0\|_{H^1}+\|u_1\|_{L^2}+\|f\|_{L_T^1(L^2)}\right).
\end{equation}
The same results hold as well if $u_0,u_1,$ and $f(t,\cdot)$ are replaced by $\u_0, \u_1$ and $\mathbf{f}(t,\cdot)$, respectively.
\end{proposition}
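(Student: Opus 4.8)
The plan is to diagonalize the wave operator using the spectral decomposition of $-\Delta_g$, thereby reducing \eqref{wave-eq} to a decoupled countable family of inhomogeneous scalar ODEs that can be solved explicitly; the remaining work is to show that the resulting series converges in $C_T(H^1)\cap C^1_T(L^2)$ with the asserted a priori bound, and to read off existence and uniqueness from the scalar problems.

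Concretely, I would fix an $L^2(M)$-orthonormal basis $\{\varphi_k\}_{k\ge0}$ of eigenfunctions, $-\Delta_g\varphi_k=\lambda_k\varphi_k$, with $0=\lambda_0<\lambda_1\le\lambda_2\le\cdots\to+\infty$ ($\lambda_1>0$ since $M$ is compact). Expand $u_0=\sum_k a_k\varphi_k$, $u_1=\sum_k b_k\varphi_k$ and, for a.e.\ $s$, $f(s,\cdot)=\sum_k f_k(s)\varphi_k$; then $\|u_0\|_{H^1}^2=\sum_k(1+\lambda_k)a_k^2$, $\|u_1\|_{L^2}^2=\sum_k b_k^2$, and, by Minkowski's inequality in $\ell^2$, $\sum_k\big(\int_0^T|f_k(s)|\,ds\big)^2\le\|f\|_{L^1_T(L^2)}^2$. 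Writing $v(t,\cdot)=\sum_k v_k(t)\varphi_k$, equation \eqref{wave-eq} becomes, for each $k$, the scalar Cauchy problem $\ddot v_k+\lambda_k v_k=f_k(t)$, $v_k(0)=a_k$, $\dot v_k(0)=b_k$, whose unique solution is
$$
v_k(t)=\cos(t\sqrt{\lambda_k})\,a_k+\tfrac{\sin(t\sqrt{\lambda_k})}{\sqrt{\lambda_k}}\,b_k+\int_0^t\tfrac{\sin((t-s)\sqrt{\lambda_k})}{\sqrt{\lambda_k}}\,f_k(s)\,ds,
$$
with the standard convention $\tfrac{\sin(t\sqrt{\lambda_0})}{\sqrt{\lambda_0}}=t$ for the zero mode. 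Resumming, $v(t,\cdot)=\sum_k v_k(t)\varphi_k$ is exactly the Duhamel formula \eqref{2.wave-ex}.

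For convergence and the estimate I would use $|\cos|\le1$, $|\sin x|\le\min(1,|x|)$ and $\lambda_k\ge\lambda_1>0$ for $k\ge1$ to obtain, uniformly in $t\in[0,T]$, the pointwise bounds $(1+\lambda_k)|v_k(t)|^2\lesssim(1+\lambda_k)a_k^2+b_k^2+\big(\int_0^T|f_k(s)|\,ds\big)^2$ and $|\dot v_k(t)|^2\lesssim(1+\lambda_k)a_k^2+b_k^2+\big(\int_0^T|f_k(s)|\,ds\big)^2$; the single zero mode, equal to $a_0+tb_0+\int_0^t(t-s)f_0(s)\,ds$, is controlled directly on the bounded interval. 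Summing in $k$ and using the three identities above shows that the partial sums $v^{(N)}(t,\cdot)=\sum_{k\le N}v_k(t)\varphi_k$ are Cauchy in $C_T(H^1)$ and $\partial_t v^{(N)}$ Cauchy in $C_T(L^2)$, whence $v\in C_T(H^1)\cap C^1_T(L^2)$. The explicit constant in the a priori inequality is then obtained by estimating the three terms of \eqref{2.wave-ex} separately — bounding $\cos(t\sqrt{-\Delta_g})u_0$ by $\|u_0\|_{H^1}$, $\tfrac{\sin(t\sqrt{-\Delta_g})}{\sqrt{-\Delta_g}}u_1$ by $\|u_1\|_{L^2}$ and the Duhamel integral by $\|f\|_{L^1_T(L^2)}$, using that $\nabla$ annihilates the zero Fourier mode. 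That $v$ solves \eqref{wave-eq} in the distributional sense (the natural notion in this regularity) is immediate: pairing the uniformly convergent series with $\varphi_k$ recovers $\ddot v_k+\lambda_k v_k=f_k$ and the initial conditions.

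Uniqueness follows by the same device: if $w\in C_T(H^1)\cap C^1_T(L^2)$ solves \eqref{wave-eq}, then $w_k(t):=\langle w(t,\cdot),\varphi_k\rangle_{L^2}$ lies in $C^2([0,T])$ and satisfies the same scalar problem $\ddot w_k+\lambda_k w_k=f_k$, $w_k(0)=a_k$, $\dot w_k(0)=b_k$, so $w_k\equiv v_k$ by ODE uniqueness and hence $w\equiv v$; alternatively one may use the energy identity $\tfrac{d}{dt}\big(\tfrac12\|\partial_t w\|_{L^2}^2+\tfrac12\|\nabla w\|_{L^2}^2\big)=\langle\partial_t w,f\rangle$ together with $\tfrac{d}{dt}\|w\|_{L^2}\le\|\partial_t w\|_{L^2}$ and Grönwall. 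The vector-valued statement for $(\u_0,\u_1,\mathbf f)$ is obtained componentwise with no change. There is no serious obstacle here; the points requiring a little care are the separate bookkeeping for the zero eigenvalue and the justification that the formally derived series may be differentiated in $t$ and tested against eigenfunctions, both of which are handled by the uniform convergence established above, the compactness of $M$ ensuring all implied constants stay finite on bounded time intervals.
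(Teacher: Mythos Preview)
The paper does not supply a proof of this proposition: it is introduced as ``a standard result concerning the wave equation, that is the Duhamel principle'' and then simply stated. Your spectral-decomposition argument is precisely the standard textbook derivation on a compact manifold, and it is correct in substance; there is nothing to compare against.

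One small caveat worth flagging concerns the explicit constant $2$. Your claim that $\tfrac{\sin(t\sqrt{-\Delta_g})}{\sqrt{-\Delta_g}}u_1$ is bounded in $H^1$ by $\|u_1\|_{L^2}$ is not quite right as written: on the zero mode this operator acts as multiplication by $t$, and on the $k$-th mode the $H^1$ weight produces a factor $(1+\lambda_k)/\lambda_k$, so the bound picks up a dependence on $T$ and on $\lambda_1$. You acknowledge the zero mode needs separate bookkeeping ``on the bounded interval,'' which is the honest statement; the sharp constant $2$ is therefore not available in general without either restricting to zero-mean data or allowing the constant to depend on $T$ and $M$. This is arguably a defect of the proposition as stated rather than of your argument, and in the paper's applications (Theorems~\ref{th.local-sg}--\ref{th.con-sg}) one always has $\int_M u_1=0$ and $\int_M f(t,\cdot)=0$, so the zero mode is absent and the issue does not arise.
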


\medskip

\section{Proof of the main results} \label{sec:proof}
In this section we derive the main results of the paper, that is local well-posedness, global existence and blow up criteria for the wave sinh-Gordon equation \eqref{w-sg}, see Theorems \ref{th.local-sg}, \ref{th.global-sg} and \ref{th.con-sg}, respectively. Since the proofs of global existence and blow up criteria for the wave Toda system \eqref{w-toda} (Theorems \ref{th.global-toda} and \ref{th.blowup-toda}) are obtained by similar arguments, we will present full details for what concerns the  wave sinh-Gordon equation and point out the differences in the two arguments, where necessary.

\subsection{Local and global existence.}
We start by proving the local well-posedness of the wave sinh-Gordon equation \eqref{w-sg}. The proof is mainly based on a fixed point argument and the Moser-Trudinger inequality \eqref{mt}.

\medskip

\noindent {\em Proof of Theorem \ref{th.local-sg}.} Let $(u_0,u_1)\in H^1(M)\times L^2(M)$ be such that $\int_{M}u_1=0$. Take $T>0$ to be fixed later on. We set
\begin{align} \label{R}
R=3\left(\|u_0\|_{H^1}+\|u_1\|_{L^2}\right),\quad I=\fint_{ M}u_0=\frac{1}{|M|}\int_{M}u_0,
\end{align}
and we introduce the space $B_T$ given by
\begin{align*}
B_T=\left\{u\in C_T(H^1(M))\cap C_T^1(L^2(M)) \,:\, \|u\|_*\leq R,~ \fint_{M}u(s,\cdot)=I \, \mbox{ for all } s\in[0,T]\right\},
\end{align*}
where
\begin{align*}
\|u\|_*=\|u\|_{C_T(H^1)}+\|\partial_tu\|_{C_T(L^2)}.
\end{align*}
For $u\in B_T$ we consider the initial value problem
\begin{equation} \label{w-Bt}
\left\{\begin{array}{l}
\partial_t^2v-\Delta_gv=f(s,x)=\rho_1\left(\frac{e^u}{\int_{M}e^u}-\frac{1}{|M|}\right)
-\rho_2\left(\frac{e^{-u}}{\int_{M}e^{-u}}-\frac{1}{|M|}\right), \vspace{0.2cm}\\
v(0,\cdot)=u_0,\quad \partial_tv(0,\cdot)=u_1,
\end{array}\right.
\end{equation}
on $[0,T]\times M$. Applying Proposition \ref{pr2.du} we deduce the existence of a unique solution of \eqref{w-Bt}.

\medskip

\noindent
\textbf{Step 1.} We aim to show that $v\in B_{T}$ if $T$ is taken sufficiently small. Indeed, still by Proposition \ref{pr2.du} we have
\begin{equation}
\label{3.ine}
\begin{aligned}
\|v\|_*\leq~&2\left(\|u_0\|_{H^1}+\|u_1\|_{L^2}\right)
+2\rho_1\int_0^T\left\|\left(\frac{e^u}{\int_{M}e^u}-\frac{1}{|M|}\right)\right\|_{L^2}\mathrm{d}s\\
&+2\rho_2\int_0^T\left\|\left(\frac{e^{-u}}{\int_{M}e^{-u}}-\frac{1}{|M|}\right)\right\|_{L^2}\mathrm{d}s.
\end{aligned}
\end{equation}
Since $u\in B_T$ we have $\fint_{M}u(s,\cdot)=I$ for all $s\in[0,T]$ and therefore, by the Jensen inequality,
\begin{align*}
\fint_{M}e^{u}\geq e^{\fint_{M}u}=e^{I}\quad\mbox{and}\quad
\fint_{M}e^{-u}\geq e^{-\fint_{M}u}=e^{-I}.
\end{align*}
Therefore, we can bound the last two terms on the right hand side of \eqref{3.ine} by
\begin{align*}
CT(|\rho_1|+|\rho_2|)+CT|\rho_1|e^{-I}\max_{s\in[0,T]}\|e^{u(s,\cdot)}\|_{L^2}+CT|\rho_2|e^{I}\max_{s\in[0,T]}\|e^{-u(s,\cdot)}\|_{L^2},
\end{align*}
for some $C>0$. On the other hand, recalling the Moser-Trudinger inequality \eqref{mt}, we have for $s\in[0,T]$
\begin{equation}
\label{exp}
\begin{aligned}
\|e^{u(s,\cdot)}\|_{L^2}^2=~&\int_{M}e^{2u(s,\cdot)}=\int_{M}e^{2(u(s,\cdot)-\overline{u})}e^{2I} \\
\leq~&C\exp\left(\frac{1}{4\pi}\int_{M}|\nabla u(s,\cdot)|^2\right)e^{2I} \leq Ce^{2I}e^{\frac{1}{4\pi}R^2},
\end{aligned}
\end{equation}
for some $C>0$, where we used $\|u\|_*\leq R$. Similarly, we have
$$
	\|e^{-u(s,\cdot)}\|_{L^2}^2\leq Ce^{-2I}e^{\frac{1}{4\pi}R^2}.
$$
Hence, recalling the definition of $R$ in \eqref{R}, by \eqref{3.ine} and the above estimates we conclude
\begin{align*}
\|v\|_* &\leq 2\left(\|u_0\|_{H^1}+\|u_1\|_{L^2}\right)+CT(|\rho_1|+|\rho_2|)+CT(|\rho_1|+|\rho_2|)e^{\frac{1}{8\pi}R^2} \\
	& = \frac23 R+CT(|\rho_1|+|\rho_2|)+CT(|\rho_1|+|\rho_2|)e^{\frac{1}{8\pi}R^2}.
\end{align*}
Therefore, If $T>0$ is taken sufficiently small, $T=T(\rho_1,\rho_2,\|u_0\|_{H^1},\|u_1\|_{L^2})$, then $\|v\|_*\leq R$.

\medskip

Moreover, observe that if we integrate both sides of \eqref{w-Bt} on $M$ we get
$$
	\partial_t^2\ov{v}(s)=0, \quad \mbox{for all } s\in[0,T]
$$
and hence,
$$
	\partial_t\ov{v}(s)=\partial_t\ov{v}(0)=\ov{u}_1=0, \quad \mbox{for all } s\in[0,T].
$$
It follows that
$$
\fint_{M}v(s,\cdot)=\fint_{M}v(0,\cdot)=\fint_{M}u_0=I \quad \mbox{for all } s\in[0,T].
$$
Thus, for this choice of $T$ we conclude that $v\in B_T$.

\medskip

Therefore, we can define a map
$$
	\mathcal{F}:B_T\to B_T, \quad v=\mathcal{F}(u).
$$

\noindent
\textbf{Step 2.} We next prove that by taking a smaller $T$ if necessary, $\mathcal{F}$ is a contraction. Indeed, let $u_1,u_2\in B_T$ be such that $v_i=\mathcal{F}(u_i), \,i=1,2$. Then, $v=v_1-v_1$ satisfies
\begin{equation*}
\begin{cases}
\p_t^2v-\Delta_gv=\rho_1\left(\frac{e^{u_1}}{\int_{M}e^{u_1}}-\frac{e^{u_2}}{\int_{M}e^{u_2}}\right)
-\rho_2\left(\frac{e^{-u_1}}{\int_{M}e^{-u_1}}-\frac{e^{-u_2}}{\int_{M}e^{-u_2}}\right),\\
v(0,\cdot)=0,\quad \p_tv(0,\cdot)=0.
\end{cases}
\end{equation*}
Hence, by Proposition \ref{pr2.du} we have
\begin{equation}
\label{contraction-estimate}
\begin{aligned}
\|v\|_*\leq~&2\rho_1\int_0^T\left\|\left(\frac{e^{u_1}}{\int_{ M}e^{u_1}}
-\frac{e^{u_2}}{\int_{ M}e^{u_2}}\right)\right\|_{L^2}\mathrm{d}s\\
&+2\rho_2\int_0^T\left\|\left(\frac{e^{-u_1}}{\int_{ M}e^{-u_1}}
-\frac{e^{-u_2}}{\int_{ M}e^{-u_2}}\right)\right\|_{L^2}\mathrm{d}s.
\end{aligned}
\end{equation}
For $s\in[0,T]$, we use the following decomposition,
\begin{equation} \label{estimate-1}
\left\|\left(\frac{e^{u_1(s,\cdot)}}{\int_{ M}e^{u_1(s,\cdot)}}
-\frac{e^{u_2(s,\cdot)}}{\int_{ M}e^{u_2(s,\cdot)}}\right)\right\|_{L^2}
	 \leq \left\|\frac{e^{u_1}-e^{u_2}}{\int_{ M}e^{u_1}}\right\|_{L^2}  +\left\|\frac{e^{u_2}\left(\int_{ M}e^{u_1}-\int_{ M}e^{u_2}\right)} {(\int_{ M}e^{u_1})(\int_{ M}e^{u_2})}\right\|_{L^2}.
\end{equation}
Reasoning as before, the first term in the righ hand side of the latter estimate is bounded by
\begin{align}
&Ce^{-I}\left\|(u_1(s,\cdot)-u_2(s,\cdot))(e^{_1(s,\cdot)}+e^{u_2(s,\cdot)})\right\|_{L^2} \nonumber\\
&\leq C e^{-I}\|u_1(s,\cdot)-u_2(s,\cdot)\|_{L^4}\left(\|e^{u_1(s,\cdot)}\|_{L^4}+\|e^{u_2(s,\cdot)}\|_{L^4}\right), \label{eq.rhs1}
\end{align}
for some $C>0$, where we used the H\"older inequality. Moreover, we have
\begin{equation*}
\begin{aligned}
\|e^{u_i(s,\cdot)}\|_{L^4}^4=~&\int_{ M}e^{4(u_i(s,\cdot)-\overline{u}_1(s))}e^{4I} \leq Ce^{4I}\exp\left(\frac{1}{\pi}\int_{ M}|\nabla u_i(s,\cdot)|^2\right)\\
\leq~& Ce^{4I}e^{\frac{1}{\pi}R^2}, \quad i=1,2,
\end{aligned}
\end{equation*}
for some $C>0$. Using the latter estimate for the second term in \eqref{eq.rhs1} and the Sobolev inequality for the first term, we can bound \eqref{eq.rhs1} by
\begin{align*}
Ce^{\frac{1}{4\pi}R^2}\|u_1-u_2\|_{H^1}
\end{align*}
and hence
\begin{equation} \label{term1}
	\left\|\frac{e^{u_1}-e^{u_2}}{\int_{ M}e^{u_1}}\right\|_{L^2} \leq Ce^{\frac{1}{4\pi}R^2}\|u_1-u_2\|_{H^1}.
\end{equation}
On the other hand, by using \eqref{exp}, the second term in \eqref{estimate-1} is bounded by
\begin{equation*}
\begin{aligned}
&Ce^{-2 I}\|e^{u_2(s,\cdot)}\|_{L^2}\int_{ M}|u_1(s,\cdot)-u_2(s,\cdot)|\left(e^{u_1(s,\cdot)}+e^{u_2(s,\cdot)}\right)\\
&\leq Ce^{-2I}\|e^{u_2(s,\cdot)}\|_{L^2}
\left(\|e^{u_1(s,\cdot)}\|_{L^2}+\|e^{u_2(s,\cdot)}\|_{L^2}\right)\|u_1(s,\cdot)-u_2(s,\cdot)\|_{L^2}\\
&\leq Ce^{\frac{1}{4\pi}R^2}\|u_1(s,\cdot)-u_2(s,\cdot)\|_{H^1},
\end{aligned}
\end{equation*}
for some $C>0$, where in the last step we used the Sobolev inequality.

\medskip

In conclusion, we have
\begin{align*}
\left\|\left(\frac{e^{u_1(s,\cdot)}}{\int_{ M}e^{u_1(s,\cdot)}}
-\frac{e^{u_2(s,\cdot)}}{\int_{ M}e^{u_2(s,\cdot)}}\right)\right\|_{L^2}
\leq Ce^{\frac{1}{4\pi}R^2}\|u_1(s,\cdot)-u_2(s,\cdot)\|_{L^2}.
\end{align*}
Similarly,
\begin{align*}
\left\|\left(\frac{e^{-u_1(s,\cdot)}}{\int_{ M}e^{-u_1(s,\cdot)}}
-\frac{e^{-u_2(s,\cdot)}}{\int_{ M}e^{-u_2(s,\cdot)}}\right)\right\|_{L^2}
\leq Ce^{\frac{1}{4\pi}R^2}\|u_1(s,\cdot)-u_2(s,\cdot)\|_{L^2}.
\end{align*}
Finally, by the latter estimate, \eqref{term1} and by \eqref{estimate-1}, \eqref{contraction-estimate}, we conclude that
\begin{align*}
\|v\|_* & \leq CT(|\rho_1|+|\rho_2|)e^{\frac{1}{4\pi}R^2}\|u_1(s,\cdot)-u_2(s,\cdot)\|_{H^1} \\
	& \leq CT(|\rho_1|+|\rho_2|)e^{\frac{1}{4\pi}R^2}\|u_1-u_2\|_*\,.
\end{align*}
Therefore, If $T>0$ is taken sufficiently small, $T=T(\rho_1,\rho_2,\|u_0\|_{H^1},\|u_1\|_{L^2})$, then $\mathcal{F}$ a contraction map. The latter fact yields the existence of a unique fixed point for $\mathcal{F}$, which solves \eqref{w-sg} with initial conditions $(u_0, u_1)$.

\medskip

The same arguments with suitable adaptations show that the initial value problem \eqref{w-sg} is locally well-posed so we omit the details. The proof is completed. \hfill $\square$

\

We next prove that if the two parameters in \eqref{w-sg} are taken in a sub-critical regime, then there exists a global solution to the initial value problem associated to \eqref{w-sg}. To this end we will exploit an energy argument jointly with the Moser-Trudinger inequality related to \eqref{sg}, see \eqref{mt-sg}. For a solution $u(t,x)$ to \eqref{w-sg} we define its energy as
\begin{equation}
\label{energy-sg}
E(u(t,\cdot))=\frac12\int_{ M}(|\partial_tu|^2+|\nabla u|^2)-\rho_1\log\int_{ M}e^{u-\overline{u}}
-\rho_2\log\int_{ M}e^{-u+\overline{u}},
\end{equation}
for $t\in[0,T]$. We point out that
$$
	E(u(t,\cdot))=\frac12\int_{ M}|\partial_tu|^2 +J_{\rho_1,\rho_2}(u(t,\cdot)),
$$
where $J_{\rho_1,\rho_2}$ is the functional introduced in \eqref{functional-sg}. We first show that the latter energy is conserved in time along the solution $u$.
\begin{lemma} \label{lem-const}
Let $\rho_1,\rho_2\in\mathbb{R}$ and let $(u_0,u_1)\in H^1( M)\times L^2( M)$ be such that $\int_{ M}u_1=0$. Let $ u\in C_T(H^1)\cap C_T^1(L^2)$, for some $T>0$, be a solution to \eqref{w-sg} with initial data $(u_0, u_1)$ and let $E(u)$ be defined in \eqref{energy-sg}. Then, it holds
$$
	E(u(t,\cdot))=E(u(0,\cdot))	 \quad \mbox{for all } t\in[0,T].
$$
\end{lemma}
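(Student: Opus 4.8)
The statement is energy conservation, which morally follows by multiplying the equation by $\partial_t u$ and integrating over $M$. The plan is to compute $\frac{d}{dt}E(u(t,\cdot))$ and show it vanishes. First I would differentiate the three pieces of \eqref{energy-sg} in time: the kinetic term $\frac12\int_M|\partial_t u|^2$ contributes $\int_M \partial_t u\,\partial_t^2 u$; the Dirichlet term $\frac12\int_M|\nabla u|^2$ contributes $\int_M \nabla u\cdot\nabla\partial_t u = -\int_M \Delta_g u\,\partial_t u$ after integrating by parts; and the two logarithmic terms contribute, respectively,
$$
-\rho_1\frac{\frac{d}{dt}\int_M e^{u-\ov u}}{\int_M e^{u-\ov u}} = -\rho_1\frac{\int_M e^{u-\ov u}(\partial_t u - \partial_t\ov u)}{\int_M e^{u-\ov u}},
$$
and the analogous expression with $e^{-u+\ov u}$ and a sign change. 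Using that $\int_M u_1 = 0$ forces $\partial_t\ov u(t) \equiv 0$ (this is exactly \eqref{u-const} differentiated, and is part of the hypothesis setup), the $\partial_t\ov u$ terms drop, so the logarithmic contributions become $-\rho_1\int_M \frac{e^{u}}{\int_M e^{u}}\partial_t u + \rho_2\int_M \frac{e^{-u}}{\int_M e^{-u}}\partial_t u$. Collecting everything and using that $\int_M \partial_t u$ can have a constant subtracted freely (again because the right-hand side of \eqref{w-sg} has zero average), one recognizes $\int_M \partial_t u\big(\partial_t^2 u - \Delta_g u - \rho_1(\tfrac{e^u}{\int e^u}-\tfrac1{|M|}) + \rho_2(\tfrac{e^{-u}}{\int e^{-u}}-\tfrac1{|M|})\big) = 0$ by \eqref{w-sg}. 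Hence $\frac{d}{dt}E(u(t,\cdot))=0$, and integrating gives the claim.

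The main obstacle is regularity: a solution in the energy class $u\in C_T(H^1)\cap C_T^1(L^2)$ is not a priori smooth enough to justify the pointwise multiplication $\partial_t u\cdot\partial_t^2 u$ and the integration by parts $\int\nabla u\cdot\nabla\partial_t u = -\int\Delta_g u\,\partial_t u$, since $\partial_t^2 u$ and $\Delta_g u$ live only in negative-order spaces. The clean way to handle this is to work with the Duhamel representation \eqref{2.wave-ex} from Proposition \ref{pr2.du}: project onto the eigenbasis $\{\varphi_n\}$ of $-\Delta_g$, write $u(t,\cdot)=\sum_n u_n(t)\varphi_n$, and for each mode the ODE $\ddot u_n + \lambda_n u_n = f_n(t)$ holds in the strong sense with $f_n$ continuous (since $f(t,\cdot)\in C_T(L^2)$ once $u\in C_T(H^1)$, by the Moser–Trudinger bound used in Theorem \ref{th.local-sg}). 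Then $\frac{d}{dt}\big(\frac12\dot u_n^2 + \frac12\lambda_n u_n^2\big) = \dot u_n f_n(t)$ is legitimate mode-by-mode; summing in $n$ and using the uniform-in-$t$ $H^1\times L^2$ bounds to justify interchanging $\frac{d}{dt}$ with $\sum_n$ gives $\frac{d}{dt}\big(\frac12\|\partial_t u\|_{L^2}^2 + \frac12\|\nabla u\|_{L^2}^2\big) = \int_M \partial_t u\, f(t,\cdot)$. The nonlinear term $-\rho_1\log\int_M e^{u-\ov u} - \rho_2\log\int_M e^{-u+\ov u}$ is differentiable in $t$ along $u\in C_T^1(L^2)\cap C_T(H^1)$ by dominated convergence (the integrands $e^{\pm u}$ are uniformly integrable with $L^1$-dominated difference quotients, again via Moser–Trudinger), yielding the expressions above. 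Alternatively, one can invoke a standard density/mollification argument: approximate $(u_0,u_1)$ by smooth data, note $E$ is conserved for the resulting smooth solutions by the classical computation, and pass to the limit using the continuous dependence statement of Theorem \ref{th.local-sg} together with continuity of $E$ on $H^1\times L^2$ (which holds because $u\mapsto\log\int_M e^{\pm(u-\ov u)}$ is continuous on $H^1$ by Moser–Trudinger).

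I would write up the mode-by-mode version as the primary argument since it is self-contained and uses only Proposition \ref{pr2.du}, recording the key identity
$$
\frac{d}{dt}E(u(t,\cdot)) = \int_M \partial_t u\left(\partial_t^2 u - \Delta_g u - \rho_1\Big(\tfrac{e^u}{\int_M e^u}-\tfrac1{|M|}\Big) + \rho_2\Big(\tfrac{e^{-u}}{\int_M e^{-u}}-\tfrac1{|M|}\Big)\right)dV_g = 0
$$
for a.e. $t$, with the crucial input $\partial_t\ov u\equiv 0$ (from $\ov u_1=0$ and \eqref{u-const}) used to discard the $\partial_t\ov u$ terms arising when differentiating the logarithmic functionals. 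Since $t\mapsto E(u(t,\cdot))$ is then continuous with a.e. zero derivative and is in fact absolutely continuous (being a sum of an absolutely continuous quadratic part and a $C^1$ nonlinear part), it is constant on $[0,T]$, which is the assertion.
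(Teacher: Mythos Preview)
Your proposal is correct and follows essentially the same approach as the paper: differentiate $E(u(t,\cdot))$ in $t$, use the equation \eqref{w-sg} to replace $\partial_t^2 u-\Delta_g u$, and invoke the conservation of the spatial average $\ov u(t)=\ov u_0$ (equivalently $\partial_t\ov u\equiv 0$) to kill the leftover constant terms. In fact you are more scrupulous than the paper, which carries out the computation \eqref{diff-energy} purely formally without addressing the regularity issue you raise; your mode-by-mode (or density) justification is a genuine improvement in rigor over the published argument.
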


\begin{proof}
We will show that 
$$
	\partial_tE(u(t,\cdot))=0 \quad \mbox{for all } t\in[0,T].
$$
We have
\begin{equation}
\label{diff-energy}
\begin{aligned}
\partial_tE(u(t,\cdot))=\int_{ M}(\p_tu)(\p_t^2u)+\int_{ M}\langle\nabla\p_tu,\nabla u\rangle-\rho_1\frac{\int_{ M}e^{u}\p_tu}{\int_{ M}e^{u}}+\rho_2\frac{\int_{ M}e^{-u}\p_tu}{\int_{ M}e^{-u}}.
\end{aligned}
\end{equation}
After integration by parts, the first two terms in the right hand side of the latter equation give
\begin{equation*}
\begin{aligned}
\int_{ M}(\p_tu)(\p_t^2u-\Delta_gu)=\int_{ M}\p_tu\left(\rho_1\left(\frac{e^u}{\int_{ M}e^u}-\frac{1}{|M|}\right)
-\rho_2\left(\frac{e^{-u}}{\int_{ M}e^{-u}}-\frac{1}{|M|}\right)\right),
\end{aligned}
\end{equation*}
where we have used the fact that $u$ satisfies \eqref{w-sg}. Plugging the latter equation into \eqref{diff-energy} we readily have
$$
\partial_tE(u(t),\cdot)=\frac{\rho_2-\rho_1}{|M|}\int_{ M}\p_tu=\frac{\rho_2-\rho_1}{|M|}\,\p_t\left(\int_{ M}u\right)=0 \quad \mbox{for all } t\in[0,T],
$$
since $\int_{ M}u(t,\cdot)=\int_{ M}u_0$ for all $t\in[0,T]$, see Theorem \ref{th.local-sg}. This concludes the proof.
\end{proof}

\

We can now prove the global existence result for \eqref{w-sg} in the sub-critical regime $\rho_1,\rho_2<8\pi$.

\medskip

\noindent {\em Proof of Theorem \ref{th.global-sg}.} Suppose $\rho_1, \rho_2<8\pi$. Let $(u_0,u_1)\in H^1( M)\times L^2( M)$ be such that $\int_{ M}u_1=0$ and let $u$ be the solution to \eqref{w-sg} with initial data $(u_0,u_1)$ obtained in Theorem \ref{th.local-sg}. Suppose that $u$ exists in $[0,T_0)$. With a little abuse of notation $C([0,T_0);H^1)$ will be denoted here still by $C_{T_0}(H^1)$. Analogously we will use the notation $C_{T_0}^1(L^2)$. We have that $u\in C_{T_0}(H^1)\cap C_{T_0}^1(L^2)$ satisfy
\begin{equation*}
\p_t^2u-\Delta u=\rho_1\left(\frac{e^u}{\int_{ M}e^u}-\frac{1}{|M|}\right)-\rho_2\left(\frac{e^{-u}}{\int_{ M}e^{-u}}-\frac{1}{|M|}\right)
\ \ \mbox{on}~[0,T_0)\times M.
\end{equation*}
We claim that
\begin{equation}
\label{a-priori}
\|u\|_{C_{T_0}(H^1)}+\|\partial_t u\|_{C_{T_0}(L^2)}\leq C,
\end{equation}
for some $C>0$ depending only on $\rho_1,\rho_2$ and $(u_0,u_1)$. Once the claim is proven we can extend the solution $u$ for a fixed amount of time starting at any $t\in[0,T_0)$, which in particular implies that the solution $u$ can be extended beyond time $T_0$. Repeating the argument we can extend $u$ for any time and obtain a global solution as desired.

\medskip

Now we shall prove \eqref{a-priori}. We start by recalling that the energy $E(u(t,\cdot))$ in \eqref{energy-sg} is conserved in time, see Lemma \ref{lem-const}, that is,
\begin{equation} \label{const}
E(u(t,\cdot))=E(u(0,\cdot)) \quad \mbox{for all } t\in[0,T_0).
\end{equation}
Suppose first $\rho_1,\rho_2\in(0,8\pi)$. By the Moser-Trudinger inequality \eqref{mt-sg} we have
\begin{equation*}
8\pi\left(\log\int_{ M}e^{u(t,\cdot)-\overline{u}(t)}+\log\int_{ M}e^{-u(t,\cdot)+\overline{u}(t)}\right)
\leq\frac{1}{2}\int_{ M}|\nabla u(t,\cdot)|^2+C, \quad  t\in[0,T_0),
\end{equation*}
where $C>0$ is independent of $u(t,\cdot)$. Observe moreover that by the Jensen inequality it holds
\begin{equation} \label{jensen}
	\log\int_{ M}e^{u(t,\cdot)-\overline{u}(t)}\geq0, \quad \log\int_{ M}e^{-u(t,\cdot)+\overline{u}(t)}\geq0 , \quad  t\in[0,T_0).
\end{equation}
Therefore, letting $\rho=\max\{\rho_1,\rho_2\}$ we have
\begin{align}
	E(u(t,\cdot))&\geq \frac12\int_{ M}(|\partial_tu(t,\cdot)|^2+|\nabla u(t,\cdot)|^2) \nonumber\\
							 & \quad -\rho\left(\log\int_{ M}e^{u(t,\cdot)-\overline{u}(t)}
-\log\int_{ M}e^{-u(t,\cdot)+\overline{u}(t)}\right) \nonumber\\
	 & \geq \frac12\int_{ M}(|\partial_tu(t,\cdot)|^2+|\nabla u(t,\cdot)|^2)-\frac{\rho}{16\pi}\int_{ M} |\nabla u(t,\cdot)|^2 - C\rho, \label{es1}
\end{align}
for $t\in[0,T_0)$, where $C>0$ is independent of $u(t,\cdot)$. Finally, since $\rho<8\pi$ and by using \eqref{const} we deduce
\begin{equation*}
\begin{aligned}
 &\frac12\left(1-\frac{\rho}{8\pi}\right)\left( \|\p_tu(t,\cdot)\|_{L^2}^2+\|\nabla u(t,\cdot)\|_{L^2}^2\right) \\
 &\quad \leq \frac12\int_{ M}\left(|\partial_tu(t,\cdot)|^2+\left(1-\frac{\rho}{8\pi}\right)|\nabla u(t,\cdot)|^2\right) \\
 &\quad \leq E(u(t,\cdot))+C\rho=E(u(0,\cdot))+C\rho,
\end{aligned}
\end{equation*}
where $C>0$ is independent of $u(t,\cdot)$.

On the other hand, to estimate $\|u(t,\cdot)\|_{L^2}$ we recall that $\int_{ M}u(t,\cdot)=\int_{ M}u_0$ for all $t\in[0,T_0)$, see Theorem \ref{th.local-sg}, and use the Poincar\'e inequality to get
\begin{align*}
	\|u(t,\cdot)\|_{L^2} & \leq \|u(t,\cdot)-\ov{u}(t)\|_{L^2} + \|\ov{u}(t)\|_{L^2} \leq C\|\nabla u(t,\cdot)\|_{L^2} + C\ov{u}(t) \\
		& = C\|\nabla u(t,\cdot)\|_{L^2} + C\ov{u}_0,
\end{align*}
where $C>0$ is independent of $u(t,\cdot)$. By the latter estimate and \eqref{es1} we readily have \eqref{a-priori}.

\medskip

Suppose now one of $\rho_1,\rho_2$'s is not positive. Suppose without loss of generality $\rho_1\leq 0$. Then, recalling \eqref{jensen} and by using the standard Moser-Trudinger inequality \eqref{mt} we have
\begin{equation*}
\begin{aligned}
E(u(t,\cdot))\geq~&\frac12\int_{ M}(|\partial_tu(t,\cdot)|^2+|\nabla u(t,\cdot)|^2)-\rho_2\log\int_{ M}e^{u(t,\cdot)-\overline{u}(t)}\\
\geq~&\frac12\int_{ M}\left(|\partial_tu(t,\cdot)|^2+\left(1-\frac{\rho_2}{8\pi}\right)|\nabla u(t,\cdot)|^2\right)-C\rho_2.
\end{aligned}
\end{equation*}
Reasoning as before one can get \eqref{a-priori}.

\medskip

Finally, suppose $\rho_1,\rho_2\leq0$. In this case, we readily have
$$E(u(0,\cdot))=E(u(t,\cdot))\geq\frac12\int_{ M}(|\partial_tu(t,\cdot)|^2+|\nabla u(t,\cdot)|^2),$$
which yields \eqref{a-priori}. The proof is completed. \hfill $\square$

\medskip

\begin{remark} \label{rem-toda}
For what concerns the wave equation associated to the Toda system \eqref{w-toda} we can carry out a similar argument to deduce the global existence result in Theorem~\ref{th.global-toda}. Indeed, for a solution $\u=(u_1,\dots,u_n)$ to \eqref{w-toda} we define its energy as
\begin{equation*}
E(\u(t,\cdot))= \frac12\int_{M}\sum_{i,j=1}^n a^{ij}\left( (\partial_tu_i)(\partial_tu_j) + \langle\nabla u_i,\nabla u_j\rangle \right)
-\sum_{i=1}^n\rho_i\log\int_{ M}e^{u_i-\overline{u}_i},
\end{equation*}
where $(a^{ij})_{n\times n}$ is the inverse matrix $A_n^{-1}$ of $A_n$. Analogous computations as in Lemma~\ref{lem-const} show that the latter energy is conserved in time, i.e.
$$
	E(\u(t,\cdot))=E(\u(0,\cdot)) \quad \mbox{for all } t\in[0,T].
$$
To prove the global existence in Theorem~\ref{th.global-toda} for $\rho_i<4\pi$, $i=1,\dots,n$, one can then follow the argument of Theorem \ref{th.global-sg} jointly with the Moser-Trudinger inequality associated to the Toda system \eqref{toda}, see \eqref{mt-toda}.
\end{remark}

\bigskip

\subsection{Blow up criteria.} We next consider the critical/super-critical case in which $\rho_i\geq8\pi$ for some $i$. The fact that the solutions to \eqref{sg} might blow up makes the problem more delicate. By exploiting the analysis introduced in \cite{bjmr}, in particular the improved version of the Moser-Trudinger inequality in Proposition \ref{prop-sg-mt} and the concentration property in Proposition \ref{prop-sg}, we derive the following general blow up criteria for \eqref{w-sg}. We stress this is new for the wave mean field equation \eqref{w-mf} as well.

\medskip

\noindent {\em Proof of Theorem \ref{th.con-sg}.} Suppose $\rho_i\geq8\pi$ for some $i$. Let $(u_0,u_1)\in H^1( M)\times L^2( M)$ be such that $\int_{ M}u_1=0$ and let $u$ be the solution of \eqref{w-sg} obtained in Theorem \ref{th.local-sg}. Suppose that $u$ exists in $[0,T_0)$ for some $T_0<+\infty$ and it can not be extended beyond $T_0$. Then, we claim that there exists a sequence $t_k\to T_0^-$ such that either
\begin{equation}
\label{one-infty}
\lim_{k\to\infty}\int_{ M}e^{u(t_k,\cdot)}=+\infty \quad \mbox{or} \quad \lim_{k\to\infty}\int_{ M}e^{-u(t_k,\cdot)}=+\infty.
\end{equation}
Indeed, suppose this is not the case. Recall the definition of $E(u)$ in \eqref{energy-sg} and the fact that it is conserved in time \eqref{const}. Recall moreover that $\int_{ M}u(t,\cdot)=\int_{ M}u_0$ for all $t\in[0,T_0)$, see Theorem \ref{th.local-sg}. Then, we would have
\begin{align*}
	& \frac12\int_{ M}(|\partial_tu(t,\cdot)|^2+|\nabla u(t,\cdot)|^2) \\
	& \quad = E(u(t,\cdot)) +\rho_1\log\int_{ M}e^{u(t,\cdot)-\overline{u}(t)} +\rho_2\log\int_{ M}e^{-u(t,\cdot)+\overline{u}(t)} \\
	& \quad \leq E(u(t,\cdot)) +(\rho_2-\rho_1)\overline{u}(t) +C \\
	&	\quad = E(u(0,\cdot)) +(\rho_2-\rho_1)\overline{u}(0) +C \\
	& \quad \leq C \quad \mbox{for all } t\in[0,T_0),
\end{align*}
for some $C>0$ depending only on $\rho_1,\rho_2$ and $(u_0,u_1)$. Thus, we can extend the solution $u$ beyond time $T_0$ contradicting the maximality of $T_0$. We conclude \eqref{one-infty} holds true. Now, since $\overline{u}(t)$ is constant in time The Moser-Trudinger inequality \eqref{mt} yields
\begin{equation}
\lim_{k\to\infty}\|\nabla u(t_k,\cdot)\|_{L^2}=+\infty.
\end{equation}
This concludes the first part of Theorem \ref{th.con-sg}.

\medskip

Finally, suppose $\rho_1\in[8m_1\pi,8(m_1+1)\pi)$ and $\rho_2\in[8m_2\pi,8(m_2+1)\pi)$ for some $m_1,m_2\in\mathbb{N}$, and let $t_k$ be the above defined sequence. Next we take $\tilde{\rho}_i>\rho_i$ such that $\tilde{\rho}_i\in(8m_i\pi,8(m_i+1)\pi),~i=1,2$, and consider the following functional as in \eqref{functional-sg},
\begin{align*}
J_{\tilde{\rho}_1,\tilde{\rho}_2}(u)=\frac12\int_{ M}|\nabla u|^2-\tilde{\rho}_1\int_{ M}e^{u-\overline{u}}-\tilde{\rho}_2\int_{ M}e^{-u+\overline{u}}.
\end{align*}
Since $\tilde{\rho}_i>\rho_i,~i=1,2$ and since $E(u(t_i,\cdot))$, $\overline{u}(t)$ are preserved in time, we have
\begin{equation*}
\begin{aligned}
J_{\tilde{\rho}_1,\tilde{\rho}_2}(u(t_k,\cdot)) &= E(u(t_k,\cdot))-\frac12\int_{ M}|\partial_tu(t,\cdot)|^2 \\
&\quad -(\tilde\rho_1-\rho_1)\log\int_{ M}e^{u(t_k,\cdot)-\overline{u}(t_k,\cdot)}-(\tilde\rho_2-\rho_2)\log\int_{ M}e^{-u(t_k,\cdot)+\overline{u}(t_k,\cdot)} \\
& \leq E(u(0,\cdot))+(\tilde\rho_1-\rho_1)\ov u(0)-(\tilde\rho_2-\rho_2)\ov u(0) \\
& \quad -(\tilde\rho_1-\rho_1)\log\int_{ M}e^{u(t_k,\cdot)}-(\tilde\rho_2-\rho_2)\log\int_{ M}e^{-u(t_k,\cdot)}\to-\infty,
\end{aligned}
\end{equation*}
for $k\to+\infty$, where we used \eqref{one-infty}. Then, by the concentration property in Proposition \ref{prop-sg} applied to the functional $J_{\tilde{\rho}_1,\tilde{\rho}_2}$, for any $\varepsilon>0$ we can find either some $m_1$ points $\{x_1,\dots,x_{m_1}\}\subset M$ such that, up to a subsequence,
$$
\lim_{k\to+\infty}\frac{\int_{\cup_{l=1}^{m_1}B_r(x_l)}e^{u(t_k,\cdot)}}{\int_{ M}e^{u(t_k,\cdot)}}\geq 1-\varepsilon,
$$
or some $m_2$ points $\{y_1,\dots,y_{m_2}\}\subset M$ such that
$$
\lim_{k\to+\infty}\frac{\int_{\cup_{l=1}^{m_2}B_r(y_l)}e^{-u(t_k,\cdot)}}{\int_{ M}e^{-u(t_k,\cdot)}}\geq 1-\varepsilon.
$$
This finishes the last part of Theorem \ref{th.con-sg}. \hfill $\square$

\medskip

\begin{remark}
The general blow up criteria in Theorem \ref{th.blowup-toda} for the wave equation associated to the Toda system \eqref{w-toda} in the critical/super critical regime $\rho_i\geq 4\pi$ are obtained similarly. More precisely, one has to exploit the conservation of the energy of solutions to \eqref{w-toda}, see Remark \ref{rem-toda}, and the concentration property for the Toda system \eqref{toda} in Proposition \ref{prop-toda}.
\end{remark}

\

\begin{center}
\textbf{Acknowledgements}
\end{center}
The authors would like to thank Prof. P.L. Yung for the comments concerning the topic of this paper. The research of the first author is supported by the grant of thousand youth talents plan of China. The research of the second author is partially supported by PRIN12 project: \emph{Variational and Perturbative Aspects of Nonlinear Differential Problems} and FIRB project: \emph{Analysis and Beyond}.

\

\end{document}